\newcommand{\unblind}{1}
\newcommand{\bo}[1]{\boldsymbol{#1}}
\newtheorem{theorem}{Theorem}[section]
\newtheorem{definition}{Definition}[section]
\newtheorem{remark}{Remark}[section]
\newtheorem{lemma}{Lemma}[section]
\newtheorem{assumption}{Assumption}[section]
\def\red#1 {{\color{red}{{#1}}}\ }
\def\klaus#1 {{\color{blue}{{#1}}}\ }
\def\ca#1{{\cal #1}}
\def\nano{\scriptscriptstyle}
\def\lo#1{_{  #1}}
\def\hi#1{^{  #1}}
\def\op{\nano{\rm \uppercase{op}}}
\def\tsum{\textstyle{\sum}}
\def\ca{\mathcal}
\def\real{{\mathbb R}}
\def\cran{\overline{\rm{ran}}}
\def\oc{\hi \perp}
\def\ker{{\rm{ker}}}
\def\var{{\rm{var}}}
\def\hii#1{\hi { #1 }}
\newcommand\numberthis{\addtocounter{equation}{1}\tag{\theequation}}
\begin{document}

\def\spacingset#1{\renewcommand{\baselinestretch}%
{#1}\small\normalsize} \spacingset{1}


\if1\unblind
{
  \title{\bf Independent component analysis for multivariate functional data\thanks{The research of Joni Virta and Hannu Oja was partially supported by the Academy of Finland Grant 268703. The research of Bing Li was supported in part by the U.S. National Science Foundation grants DMS-1407537 and DMS-1713078. The research of Klaus Nordhausen was supported by CRoNoS COST Action IC1408.
}\hspace{.2cm}}
   \author[1]{Joni Virta}
   \author[2]{Bing Li}
   \author[3]{Klaus Nordhausen}
   \author[1]{Hannu Oja}
   \affil[1]{University of Turku, Finland}
   \affil[2]{Pennsylvania State University, PA, U.S.A.}
   \affil[3]{Vienna University of Technology, Austria}
   \date{}
  \maketitle
} \fi

\if0\unblind
{
  \bigskip
  \bigskip
  \bigskip
  \begin{center}
    {\LARGE\bf Independent component analysis for multivariate functional data}
\end{center}
  \medskip
} \fi

\begin{abstract}
We extend two methods of independent component analysis, fourth order blind identification and joint approximate diagonalization of eigen-matrices, to vector-valued functional data. Multivariate functional data occur naturally and frequently in modern applications, and extending independent component analysis to this setting allows us to distill important information from this type of data, going a step further than the functional principal component analysis. To allow the inversion of the covariance operator we make the assumption that the dependency between the component functions lies in a finite-dimensional subspace. In this subspace we define fourth cross-cumulant operators and use them to construct the two novel, Fisher consistent methods for solving the independent component problem for vector-valued functions. Both simulations and an application on a hand gesture data set show the usefulness and advantages of the proposed methods over functional principal component analysis.
\end{abstract}

\noindent%
{\it Keywords:} Covariance operator; Dimension reduction; Functional principal component analysis; Fourth order blind identification; Hilbert space; Joint approximate diagonalization of eigenmatrices
\vfill

\newpage
\section{Introduction}

\subsection{Independent component analysis}

Independent component analysis is a classical problem in multivariate statistics and signal processing where one assumes that the observed independent and identically distributed random vectors are linear mixtures of latent random vectors having independent marginal distributions. At its simplest this corresponds to presuming that, given the observed random vector $\textbf{x} \in \mathbb{R}^p$, there exists a non-singular unmixing matrix $\bo{\Gamma} \in \mathbb{R}^{p \times p}$ such that
\begin{equation}\label{eq:icm_vector}
\bo{\Gamma} \textbf{x} = \textbf{z},
\end{equation}
where the random vector $\textbf{z} \in \mathbb{R}^p$ has independent marginals. In the independent component problem a random sample of $\textbf{x}$ is observed and the objective is to estimate any matrix $\bo{\Gamma}$ such that \eqref{eq:icm_vector} holds. We say any matrix as the formulation of the problem is clearly not well-defined, one can freely scale, permute and change the signs of the rows in \eqref{eq:icm_vector} and the right-hand side still retains the independence of its components. As such the constraint $\mbox{cov}(\textbf{z}) = \textbf{I}$ is usually introduced, freeing us of the scale invariance. Further assuming that at most one of the components of $\textbf{z}$ is normally distributed, one can show that the vector $\textbf{z}$ can be estimated up to marginal signs, order and location \citep{comon2010handbook}.

Since its introduction in the 1980s a multitude of methods with varying approaches and assumptions have been proposed for solving the problem. These methods are generally based either on projection pursuit, decompositions of various matrices of cumulants or maximum likelihood. The most well-known example belonging to the first class is FastICA \citep{hyvarinen1997fast}, a projection pursuit method that extracts the independent components either sequentially or simultaneously by maximizing some measure of non-Gaussianity. Several different variations of FastICA exist, see for example \cite{koldovsky2006efficient, miettinen2014deflation, miettinen2017squared}. The second class includes classic methods like FOBI and JADE (see below) but also several newer ones such as \cite{moreau2001generalization, bonhomme2009consistent}. For an example of likelihood-based methods, see e.g. \cite{risk2015likelihood}.

In this work we focus exclusively on two of the very first methods proposed for independent component analysis, fourth order blind identification (FOBI) \citep{cardoso1989source} and joint approximate diagonalization of eigenmatrices (JADE) \citep{cardoso1993blind}, which are simply based on the diagonalization of various moment-based matrices. As such FOBI and JADE offer an easy starting point for various extensions of independent component analysis into the realms of non-standard data structures, some examples including versions specially tailored for time series data \citep{matilainen2015new}, tensor-valued data \citep{virta2016independent, virta2016jade} and univariate functional data \citep{li2015functional}.

Before describing our contribution we first briefly review the key steps behind FOBI and JADE, both to motivate our exposition and to contrast the constructions in the later sections. Namely, in both methods we assume that the zero-mean random vector $\textbf{x} \in \mathbb{R}^p$ obeys the independent component model in \eqref{eq:icm_vector}, and additionally that the components of $\textbf{z}$ have finite fourth moments, $\beta_i = E(z_i^4) < \infty$, $i=1, \ldots , p$. A basic result in independent component analysis then says that if $\bo{\Sigma}(\textbf{x})^{-1/2}$ is the symmetric inverse square root of the covariance matrix of $\textbf{x}$, then there exists an orthogonal matrix $\textbf{U} \in \mathbb{R}^{p \times p}$ such that the standardized random vector satisfies $\textbf{x}_{st} = \bo{\Sigma}(\textbf{x})^{-1/2} \textbf{x} = \textbf{U} \textbf{z}$.

For estimating the unknown matrix $\textbf{U}$ both methods utilize fourth moments. Defining next the matrices
\begin{equation}\label{eq:cumulant_matrix}
\textbf{C}^{ij}(\textbf{x}_{st}) = E \left\{ (\textbf{x}_{st}^T \textbf{e}_i) (\textbf{x}_{st}^T \textbf{e}_j) \textbf{x}_{st} \textbf{x}_{st}^T \right\} - \delta_{ij} \textbf{I} - \textbf{e}_i \textbf{e}_j^T - \textbf{e}_j \textbf{e}_i^T,
\end{equation}
where $\textbf{e}_{i}$ is the $i$th member of the canonical basis of $\mathbb{R}^p$ --- that is, $\textbf{e} \lo i$ has all components equal to 0 except its $i$th component, which is 1, and $\delta_{ij}$ is the Kronecker delta. The set $\mathcal{C} = \{ \textbf{C}^{ij}(\textbf{x}_{st}) \mid i,j=1, \ldots , p \}$ collects every fourth cross-cumulant of the standardized random vector $\textbf{x}_{st}$. It can be shown that under the model the unknown orthogonal matrix $\textbf{U}^T$ diagonalizes all matrices in the set $\mathcal{C}$ and JADE estimates $\textbf{U}^T$ by simultaneously (approximately) diagonalizing these matrices. FOBI can be viewed as a lighter version of JADE in that it only diagonalizes the single matrix $\sum_{i=1}^p \textbf{C}^{ii}(\textbf{x}_{st}) = E ( \textbf{x}_{st}\textbf{x}_{st}^T \textbf{x}_{st} \textbf{x}_{st}^T ) - (p + 2) \textbf{I}$, which is the sum of a subset of members of $\mathcal{C}$. By this heuristic it seems reasonable to speculate that JADE outperforms FOBI, which indeed is generally the case: see, for example, \cite{miettinen2015fourth}. Additionally, for JADE to be Fisher consistent it is sufficient that at most one of the $\beta \lo i$'s  is zero, whereas for FOBI to be  Fisher consistent we need  the stronger condition that all  $\beta_i$  are distinct. However, JADE pays for its advantages by being computationally much heavier than FOBI, and when a quick application of an independent component analysis method is needed, FOBI is often the first choice.

\subsection{Independent component analysis and functional data}

As the main contribution of this work we further extend on the functional independent component analysis proposed in \cite{li2015functional} by considering not real-valued functions but instead functions that take values in the $p$-dimensional Euclidean space. That is, for each observational unit we observe $p$ functions not necessarily residing in the same function space. Data of this form is increasingly common nowadays and some areas of application include electroencephalography (EEG) data, socio-economic time series data observed for multiple areas/countries and three-dimensional location data measured for multiple observational units over time.

Although univariate functional data analysis is currently exceedingly popular, its multivariate counterpart has received relatively little attention in the literature. Some previous contributions to the field include: \cite{silverman2005functional,berrendero2011principal,
sato2013theoretical,chiou2014multivariate,jacques2014model,
happ2016multivariate} discussed multivariate functional principal component analysis, \cite{jacques2014model} using the extracted principal components to conduct clustering and \cite{happ2016multivariate} allowing different domains for the component functions; \cite{tokushige2007crisp,ieva2011ecg} developed multivariate functional clustering using \textit{k}-means and \cite{kayano2010functional} used orthonormalized Gaussian basis functions for the same purpose; \cite{li2016nonlinear} developed sufficient dimension reduction methodology where both the predictor and the response can be multivariate functional data.

Consider next the conceptual and theoretical differences between multivariate-functional and univariate-functional extensions of independent component analysis. The two key aspects of independent component analysis are statistical independence and the notion of marginals. In a sense, the multivariate functional extension considered here is conceptually easier than the univariate functional extension developed in \citet{li2015functional}. As observed in that paper, unlike in the classical setting, the univariate functional data do not have natural marginal random variables on which to perform independent component analysis. \citet{li2015functional} tackled this issue by using the coefficients in the Karhunen-Loeve expansion as the marginal random variables to prompt the process.
Independent components are then defined in terms of these coefficients, see also \citet{gutch2012infinity}. For multivariate functional data, however, we can take a more straightforward route of simply treating the component functions as the marginals. In this context the independent component problem has the intuitively appealing objective of, given an observed multivariate random function, trying to extract another multivariate random function with independent component functions. These independent component functions can then be various latent processes, such as vital signs in the context of EEG-data. A finite-dimensional analogue for our problem is the independent subspace analysis \citep{cardoso1998multidimensional}, where we try to divide a larger space into a collection of smaller, independent subspaces. To sum up, the independent components in \citet{li2015functional} are random variables, but the independent components in this paper are random functions. From this perspective, this paper is not an extension of \citet{li2015functional}, but instead an extension of the classical independent component analysis into a different direction.

In Section \ref{sec:theoretical} we go briefly through the basics of functional analysis. The section also introduces the Cartesian product space $\mathcal{H}$ where our observed functions will reside in and a natural subclass of linear operators therein. Section \ref{sec:probability} equips the space $\mathcal{H}$ with a suitable probability structure and, having defined what we mean by a random multivariate function $X \in \mathcal{H}$, defines the covariance matrix operator of $X$. The proposed methods of functional independent component analysis are described in Section \ref{sec:independent} along with a proof of their Fisher consistency. In Section \ref{sec:sample} we derive the coordinate representations for the sample versions of the methods and in Section \ref{sec:examples} use them in a simulation study and in an application on the \textit{uWave} hand gesture data set \citep{liu2009uwave}. Finally, we close in Section \ref{sec:discussion} with some discussion and prospective ideas. The simulation and real data example were conducted with R \citep{Rcore} using the packages fda \citep{Rfda}, ggplot2 \citep{Rggplot2}, JADE \citep{MiettinenNordhausenTaskinen2017},  MASS \citep{Rmass} and reshape2 \citep{Rreshape}.

\section{Theoretical framework}\label{sec:theoretical}

\subsection{The Hilbert space $\mathcal{H}$ of vector-valued functions}

We next review the basics of functional analysis, see \cite{conway2013course} for a standard treatment. Let $T \subset \mathbb{R}$ be an interval and $( \mathcal{H}_i, \langle \cdot , \cdot \rangle_i )$, $i = 1, \ldots , p$, be separable Hilbert spaces of functions from $T$ to $\mathbb{R}$. Furthermore, let $\mathcal{B}_i$ be the Borel $\sigma$-field generated by the open sets in $\mathcal{H}_i$ with respect to the metric induced by $\langle \cdot , \cdot \rangle_i$. Let $\ca H$ be the direct sum of $\ca H \lo 1, \ldots, \ca H \lo p$; that is, $\mathcal{H} = \times_{i=1}^p \mathcal{H}_i$ is the Cartesian product of the individual spaces and  the inner product in $\ca H$ is defined by $\langle f , g \rangle_\mathcal{H} = \langle f_1 , g_1 \rangle_1 + \cdots + \langle f_p , g_p \rangle_p$, for any $f = (f_1, \ldots , f_p) \in \ca H$ and $g = (g_1, \ldots , g_p) \in \ca H$. Denoting the norms induced by the inner products $\langle \cdot , \cdot \rangle_\mathcal{H}$, $\langle \cdot , \cdot \rangle_1$, \ldots , $\langle \cdot , \cdot \rangle_p$ by $\| \cdot \|_\mathcal{H}$, $\| \cdot \|_1$, \ldots , $\| \cdot \|_p$, respectively, the relation $\| f \|^2_\mathcal{H} = \| f_1 \|^2_1 + \cdots + \| f_p \|^2_p$ is easily seen to hold for any $f = ( f \lo 1, \ldots, f \lo p ) \in \mathcal{H}$. Furthermore, a natural $\sigma$-field in $\mathcal{H}$ is the product $\sigma$-field $\mathcal{B} = \mathcal{B}_1 \times \cdots \times \mathcal{B}_p$ generated by all measurable rectangles $B_1 \times \cdots \times B_p$ where $B_i \in \mathcal{B}_i$, $i=1, \ldots , p$.

Being separable, each $\mathcal{H}_i$ admits a countable orthonormal basis, $\mathcal{E}_i = \{ e_{ik} \}_{k=1}^\infty$. Using the component bases we construct an orthonormal basis in $\ca H$ as follows. Let $e^+_{ik}$ denote the $p$-dimensional  vector of functions whose components are 0 except for the $i$th component, which is  $e \lo {ik}$. Then $\{ e \lo {ik} \hi +: i = 1, \ldots, p, k = 1, 2, \ldots \}$ is an orthonormal basis  of $\ca H$. This construction implies that the product space $\mathcal{H}$ is also separable. Throughout the paper any vector $f \in \mathcal{H}$ which has exactly one non-zero component will be called canonical, in relation to such a vector's resemblance to the canonical basis vectors in the Euclidean spaces.

Let $\mathcal{L}(\mathcal{H}_j, \mathcal{H}_i)$ be the set of all bounded linear operators from $\mathcal{H}_j$ to $\mathcal{H}_i$. That is, a linear operator $L_{ij}$ is in $\mathcal{L}(\mathcal{H}_j, \mathcal{H}_i)$ if and only if there exists a positive $M_{ij}$ such that for all $f_j \in \mathcal{H}_j$ we have $\| L_{ij} f_j \|_i \leq M_{ij} \| f_j \|_j$. Then for any $i, j$, $(\mathcal{L}(\mathcal{H}_j, \mathcal{H}_i), \| \cdot \|_{\op, ij})$ is a Banach space where the operator norm $\| \cdot \|_{\op, ij}$ is defined as
\begin{equation*}
\| L_{ij} \|_{\op, ij} = \underset{f_j \neq 0}{\mbox{sup}} \left( \frac{\| L_{ij} f_j \|_i}{\| f_j \|_j} \right).
\end{equation*}
In the following we will use the notation $\| \cdot \|\lo {\op}$ for all possible operator norms and the context will always make clear which operator norm we mean. Similarly, $I$ will be used to denote the identity operator of all considered spaces, the context again making the intended use clear. Recall also that for all $L_{ij} \in \mathcal{L}(\mathcal{H}_j, \mathcal{H}_i)$, there exists the adjoint operator $L_{ij}^*$, defined as the unique member of $\mathcal{L}(\mathcal{H}_i, \mathcal{H}_j)$ that satisfies $\langle L_{ij} f_j ,  f_i \rangle_i = \langle f_j , L_{ij}^* f_i \rangle_j$, for all $f_i \in \mathcal{H}_i$ and $f_j \in \mathcal{H}_j$.

Finally, define the tensor product $f_i \otimes f_j$ of $f_i \in \mathcal{H}_i$ and $f_j \in \mathcal{H}_j$ as the linear operator from $\mathcal{H}_j$ to $\mathcal{H}_i$ having the action $g_j \mapsto \langle f_j , g_j \rangle_j f_i$. Equivalent properties to those listed for tensor product operators from $\mathcal{H}$ to $\mathcal{H}$ in Lemma 2 of \cite{li2015functional} can also be proven for the tensor product operators from $\mathcal{H}_j$ to $\mathcal{H}_i$.

\subsection{Matrices of bounded linear operators in $\mathcal{H}$}

We next consider a natural subset of the set of all bounded linear operators from $\mathcal{H}$ to $\mathcal{H}$, constructed using bounded linear operators from the component spaces to each other. For a set of operators $\{L \lo {ij} \in \ca L ( \ca H \lo j, \ca H \lo i ): \, i,j=1, \ldots, p \}$, let   $L $ be the operator
\begin{align}\label{eq:additive mapping}
\ca H \to \ca H, \quad  (f \lo 1, \ldots, f \lo p) \in \ca H \mapsto \left(\tsum \lo {j=1} \hi p L \lo {1j} f \lo j, \ldots, \tsum \lo {j=1} \hi p L \lo {pj} f \lo j \right).
\end{align}
Intuitively, we can identify  $L$ with the matrix of bounded linear operators,
\begin{equation*}
L \equiv
\begin{pmatrix}
L_{11} & \cdots & L_{1p} \\
\vdots & \ddots & \vdots \\
L_{p1} & \cdots & L_{pp}
\end{pmatrix},
\end{equation*}
so that the map in (\ref{eq:additive mapping}) can be formally regarded as matrix multiplication. We denote the class of all such operators as $\mathcal{L}(\mathcal{H}) = \times_{i,j=1}^p\mathcal{L}(\mathcal{H}_j, \mathcal{H}_i)$. The same construction  was used in \cite{li2016}. See also \cite{sato2013theoretical} and \cite{li-chun-zhao-2014}.

Using \eqref{eq:additive mapping}, it is easy to check that an operator   $L \in \ca L ( \ca H)$ is also linear;  that is, $L(af + bg)  = a ( Lf ) + b ( Lg )$, for all $f,g \in \mathcal{H}$ and $a,b \in \mathbb{R}$. Furthermore, using the Cauchy-Schwarz inequality and the operator norm inequality one can show that, for all $L \in \mathcal{L}(\mathcal{H})$ and $f \in \mathcal{H}$, we have
\begin{equation*}
\| Lf \| \lo {\mathcal{H}} \leq \left( \tsum_{i,j = 1}^p \| L_{ij} \|^2_{\op} \right)^{1/2} \| f \| \lo {\mathcal{H}}.
\end{equation*}
That is, an operator $L \in \mathcal{L}(\mathcal{H})$ is also bounded. Thus, an operator $L \in \ca L ( \ca H)$ inherits both linearity and boundedness from its component operators $L_{ij}$. Consequently, being a bounded linear operator, any $L$ $\in$ $\mathcal{L}(\mathcal{H})$ admits the adjoint operator $L^*$. Using some algebra it is easily seen that the elements of the adjoint satisfy $(L^*)_{ij} = L_{ji}^*$, drawing an analogy to the Hermitian adjoint of a matrix in $\mathbb{C}^{p \times p}$.

Two useful subsets of $\mathcal{L}(\mathcal{H})$ are now readily defined. Call a member $L \in \mathcal{L}(\mathcal{H})$ a diagonal matrix of operators (or simply diagonal) if $L_{ij} = 0$ whenever $i \neq j$ and $L_{ii}^* = L_{ii}$. The simplest diagonal operator is the identity operator for which $L_{ii} = I$, $i = 1, \ldots ,p$. Diagonal operators play later a central role in estimating solutions to the functional independent component model and as one of our key results we prove in Section \ref{sec:independent} a connection between diagonal operators and canonical vectors. Finally, an element $U \in \mathcal{L}(\mathcal{H})$ is called unitary if $U^* U = U U^* = I$. Using the component representation it is easily seen that a sufficient and necessary condition for $U$ to be unitary is
\begin{equation*}
\sum_{k=1}^p U_{ik} U_{jk}^* = \Delta_{ij}  , \quad \mbox{for all } i,j=1, \ldots , p,
\end{equation*}
where $\Delta \lo {ij}$ is the zero operator if $i \ne j$, and is the identity operator from $\ca H \lo i$ to $\ca H \lo j$ if $i = j$. This is
a clear analogy for the orthonormality of the rows of a unitary matrix in $\mathbb{C}^{p \times p}$.

\section{Probability structure on $\mathcal{H}$}\label{sec:probability}

\subsection{Random elements in $\mathcal{H}$}

Let $(\Omega, \mathcal{F}, \mathbb{P})$ be a probability space. A random element in $\mathcal{H}_i$ is a function $X_i: \Omega \rightarrow \mathcal{H}_i$ that is $\mathcal{F} / \mathcal{B}_i$-measurable, $i = 1, \ldots , p$. Similarly, a random element in $\mathcal{H}$ is a function $X: \Omega \rightarrow \mathcal{H}$ that is $\mathcal{F} / \mathcal{B}$-measurable. A random element $X$ in $\mathcal{H}$ can thus be thought of as a random function $X(\cdot) = (X_1(\cdot), \ldots X_p(\cdot))$, where $X \lo i$ resides in $\ca H \lo i$, $i = 1, \ldots, p$. For the basic theory of random variables in function spaces see \cite{bosq2012linear}.

In the following, we  denote the set of all $m$th power integrable random elements in $\mathcal{H}$ by $\ca X \hi m ( \ca H)$, that is,
\begin{align*}
\mathcal{X}^m(\mathcal{H}) = \{ ( X: \Omega \rightarrow \mathcal{H} ):  \ E \left( \| X \|_\mathcal{H}^m \right) < \infty \}.
\end{align*}
It is easily seen that requiring $X \in \mathcal{X}^2(\mathcal{H})$ or $X \in \mathcal{X}^4(\mathcal{H})$ is equivalent to requiring the component functions to respectively satisfy $X_i \in \mathcal{X}^2(\mathcal{H}_i)$ or $X_i \in \mathcal{X}^4(\mathcal{H}_i)$, for all $i=1, \ldots , p$.

Next, define a random operator $W_{ij}$ to be a mapping $W_{ij}: \Omega \rightarrow \mathcal{L}(\mathcal{H}_j, \mathcal{H}_i)$ that is $\mathcal{F}/\mathcal{B}_{\op}$-measurable where $\mathcal{B}_{\op}$ is the Borel $\sigma$-field generated by the open sets of $\mathcal{L}(\mathcal{H}_j, \mathcal{H}_i)$ with respect to the metric induced by the operator norm $\| \cdot \|_{\op}$. If $W_{ij}$ is a random operator with $E \left( \| W_{ij} \|\lo {\op} \right) < \infty$, then the bivariate map $(f_i, f_j) \mapsto E \left( \langle f_i , W_{ij} f_j \rangle_i \right)$ is a bounded bilinear form and can be shown to induce a unique operator $A_{ij} \in \mathcal{L}(\mathcal{H}_j, \mathcal{H}_i)$ satisfying $\langle f_i , A_{ij} g_j \rangle_i = E \langle f_i , W_{ij} g_j \rangle_i$ for all $f_i \in \mathcal{H}_i$ and $g_j \in \mathcal{H}_j$. We define the expected value of $W_{ij}$ to be this operator, $E \left( W_{ij}\right) = A_{ij}$

Using the previous we are now sufficiently equipped to define the first two moments, the mean function and the covariance matrix operator, of a random element $X \in \mathcal{H}$.

\subsection{The covariance matrix operator $\Sigma_{X X}$}

Assume next that $X \in \mathcal{X}^2 (\mathcal{H})$. The expected values $E(X_i) = \mu_i \in \mathcal{H}_i$, $i = 1, \ldots , p$, are readily defined as the Riesz representation of the bounded linear functional
\begin{align*}
\ca H \lo i \to \real, \quad f \lo i \mapsto E \left( \langle f \lo i, X \lo i \rangle \lo { i} \right).
\end{align*}
Using the component-wise expected values $\mu_i$ we further define the expected value of the random element $X$ to be the function $\mu = (\mu_1, \ldots , \mu_p) \in \mathcal{H}$. As we can always center our observed data, it is not restricting to assume that $\mu = 0$, as we will do for the remainder of this work.

Consider then the random operator $(X_i \otimes X_j)$. Using the Cauchy-Schwarz inequality we have
\begin{equation*}
E \left( \| X_i \otimes X_j \|\lo {\op} \right) \leq \left\{ E \left( \| X_i \|_i^2 \right) E \left( \| X_j \|_j^2 \right) \right\}\hi {1/2},
\end{equation*}
the right-hand side of which is finite due to our assumption on square integrability. The random operator $(X_i \otimes X_j)$ thus induces the unique, bounded linear operator, $\Sigma \lo {X \lo i X \lo j} = E(X_i \otimes X_j)$, the cross-covariance operator \citep{baker1973joint} between $X_i$ and $X_j$. Using the definition of the expected value of a random operator one can further show that the adjoint operator of $\Sigma_{X \lo i X \lo j}$ is $\Sigma \lo {X \lo i X \lo j}^* = \Sigma \lo {X \lo j X \lo i}$. 

Using the $p^2$ bounded linear operators $\Sigma \lo {X \lo i X \lo j}$  we next construct the covariance matrix operator $\Sigma \lo {XX} \in \mathcal{L}(\mathcal{H})$ as
\begin{equation*}
\Sigma \lo {XX} \equiv
\begin{pmatrix}
\Sigma \lo {X \lo 1 X \lo 1} & \cdots & \Sigma \lo {X \lo 1 X \lo p} \\
\vdots & \ddots & \vdots \\
\Sigma \lo {X \lo p X \lo 1} & \cdots & \Sigma \lo {X \lo p X \lo p}
\end{pmatrix},
\end{equation*}
It is easily seen that, for $f = (f_1, \ldots , f_p) \in \mathcal{H}$, we have the equality $ f \otimes f  = ( f_i \otimes f_j )_{i,j=1}^p$ and the covariance matrix operator $\Sigma_{XX}$ can then be written compactly as $E (X \otimes X )$. This type of matrices of covariance operators were also used in \cite{li-song-2017} and \cite{song-li-2017}.

\begin{remark}
For clarity we use two different notations for the covariance matrix operator of a random function $X \in \mathcal{X}^2(\mathcal{H})$: when it is understood as a bounded linear operator in $\mathcal{H}$ we use the notation $\Sigma \lo {XX}$; when it is understood as the mapping $\mathcal{X}^2(\mathcal{H}) \rightarrow \mathcal{L}(\mathcal{H}), \quad X \mapsto \Sigma \lo {XX},$
we use the notation $\Sigma$.
\end{remark}

Recall next four key properties of the ordinary covariance matrix $\mbox{cov}(\textbf{x})$ of a square-integrable random vector $\textbf{x} = (x_1, \ldots , x_p)^T$: i) self-adjointness (symmetry), $\mbox{cov}(\textbf{x}) = \mbox{cov}(\textbf{x})^T$, ii) positive-semidefiniteness, for any $\textbf{a} \in \mathbb{R}^p$ we have $\textbf{a}^T \mbox{cov}(\textbf{x}) \textbf{a} \geq 0$, iii) affine equivariance, for any invertible matrix $\textbf{A} \in \mathbb{R}^{p \times p}$ the covariance matrix transforms as $\mbox{cov}(\textbf{Ax}) = \textbf{A} \mbox{cov}(\textbf{x}) \textbf{A}^T$ and iv) full independence property, if $x_i$ and $x_j$ are independent then $\ \mbox{cov}(\textbf{x}) _{ij} = 0$. Not surprisingly, it turns out that all of these properties are shared also by the covariance matrix operator $\Sigma_{X X}$, as described in the following lemma.

\begin{lemma} \label{lem:S_properties}
Assuming $X \in \mathcal{X}^2(\mathcal{H})$, the covariance matrix operator $\Sigma_{X X} \in \mathcal{L}(\mathcal{H})$ has the following properties:
\begin{itemize}
\item[i)] It is a self-adjoint, non-negative, trace-class operator and as such admits a spectral decomposition with the associated orthonormal basis $\{ \phi_k \}_{k=1}^\infty$.
\item[ii)] As a mapping $\Sigma: \mathcal{X}^2(\mathcal{H}) \rightarrow \mathcal{L}(\mathcal{H})$, the covariance matrix operator is affine equivariant in the sense that $\Sigma(AX) = A \Sigma(X) A^*$ for any invertible bounded linear operator $A \in \mathcal{L}(\mathcal{H})$.
\item[iii)] If $X_i$ and $X_j$ are independent, $\Sigma \lo {X \lo i X \lo j} = 0$.
\end{itemize}
\end{lemma}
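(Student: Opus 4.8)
The plan is to reduce every claim about $\Sigma_{XX}$ to the defining identity $\Sigma_{XX} = E(X \otimes X)$ and then exploit the block structure of $\mathcal{L}(\mathcal{H})$ together with the orthonormal basis $\{e_{ik}^+\}$ of $\mathcal{H}$. For the self-adjointness in (i) I would argue block-wise: combining the relation $(L^*)_{ij} = L_{ji}^*$ for operators in $\mathcal{L}(\mathcal{H})$ with the already-established identity $\Sigma_{X_i X_j}^* = \Sigma_{X_j X_i}$ gives $(\Sigma_{XX}^*)_{ij} = \Sigma_{X_j X_i}^* = \Sigma_{X_i X_j} = (\Sigma_{XX})_{ij}$, hence $\Sigma_{XX}^* = \Sigma_{XX}$. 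For non-negativity I would push the inner product through the expectation using the definition of $E(X \otimes X)$ and the action $(X \otimes X)f = \langle X, f\rangle_\mathcal{H} X$, obtaining $\langle f, \Sigma_{XX} f\rangle_\mathcal{H} = E(\langle f, X\rangle_\mathcal{H}^2) \geq 0$ for every $f \in \mathcal{H}$.

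The trace-class claim is the part I expect to require the most care, and it is the main obstacle. I would evaluate the trace in the basis $\{e_{ik}^+\}$, using $\langle X, e_{ik}^+\rangle_\mathcal{H} = \langle X_i, e_{ik}\rangle_i$ to get
\[ \sum_{i=1}^p\sum_{k=1}^\infty \langle e_{ik}^+, \Sigma_{XX}\, e_{ik}^+\rangle_\mathcal{H} = \sum_{i=1}^p\sum_{k=1}^\infty E\left(\langle X_i, e_{ik}\rangle_i^2\right) = \sum_{i=1}^p E\left(\|X_i\|_i^2\right) = E\left(\|X\|_\mathcal{H}^2\right), \]
which is finite exactly because $X \in \mathcal{X}^2(\mathcal{H})$. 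The interchange of sum and expectation is licensed by monotone convergence since the summands are non-negative, and Parseval's identity collapses the inner sum to $\|X_i\|_i^2$. Because a non-negative self-adjoint operator with finite trace is trace-class and hence compact, the spectral theorem for compact self-adjoint operators then supplies the orthonormal eigenbasis $\{\phi_k\}$, completing (i).

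For (ii) I would invoke the tensor-product identity $(Af)\otimes(Ag) = A\,(f\otimes g)\,A^*$, the analogue of the properties established in Lemma 2 of \cite{li2015functional}, so that $\Sigma(AX) = E\{(AX)\otimes(AX)\} = E\{A\,(X\otimes X)\,A^*\} = A\,E(X\otimes X)\,A^* = A\,\Sigma(X)\,A^*$, where $A$ and $A^*$ are pulled outside the expectation by its linearity. For (iii) I would test $\Sigma_{X_i X_j}$ against arbitrary $f_i \in \mathcal{H}_i$ and $g_j \in \mathcal{H}_j$: the definition yields $\langle f_i, \Sigma_{X_i X_j} g_j\rangle_i = E(\langle f_i, X_i\rangle_i\,\langle X_j, g_j\rangle_j)$, and since $\langle f_i, X_i\rangle_i$ and $\langle X_j, g_j\rangle_j$ are measurable functions of the independent random elements $X_i$ and $X_j$, the expectation factors; each factor vanishes because $\mu = 0$, forcing $\Sigma_{X_i X_j} = 0$. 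Throughout, the only genuinely delicate bookkeeping is justifying the interchanges of expectation with inner products and infinite sums, all of which are controlled by the square-integrability assumption and the definition of the expectation of a random operator.
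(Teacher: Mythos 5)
Your proposal is correct and follows essentially the same route as the paper's proof: block-wise self-adjointness via $(L^*)_{ij}=L_{ji}^*$, non-negativity from $\langle f,\Sigma_{XX}f\rangle_{\mathcal{H}}=E(\langle X,f\rangle_{\mathcal{H}}^2)$, the trace computed by Parseval's identity to equal $E(\|X\|_{\mathcal{H}}^2)<\infty$, equivariance by pulling $A$ and $A^*$ through the expectation, and the independence property by factoring the expectation against test functions. The only cosmetic differences are that you evaluate the trace in the explicit product basis $\{e_{ik}^+\}$ where the paper uses an arbitrary orthonormal basis, and you spell out the monotone-convergence and spectral-theorem justifications that the paper leaves implicit.
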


These properties were established in \cite{li2015functional} for the case of univariate $X$.

\begin{remark}
A stronger version of the affine equivariance can be shown to hold. Let $A = ( A_{ij} )_{i=1}^k {}_{j=1}^p$ where $A_{ij}$ is a linear operator from $\mathcal{H}_j$ to some suitable Hilbert space $\ca G \lo i$, and $k$ is any positive integer. Then we still have $\Sigma(AX) = A \Sigma(X) A^*$, a property that is in $\mathbb{R}^p$ called full affine equivariance.
\end{remark}
Part i of Lemma \ref{lem:S_properties} guarantees the existence of the spectral decomposition of $\Sigma_{X X}$ into a sum of rank-1 operators:
\begin{equation}
\Sigma \lo {XX} = \sum_{k=1}^\infty \lambda_k ( \phi_k \otimes \phi_k ),
\end{equation}
where $(\phi_k, \lambda_k)$ are eigenvector-eigenvalue pairs, $\{ \phi_k \}_{k=1}^\infty$ is an orthonormal basis of $\mathcal{H}$ and the eigenvalues satisfy $\lambda_1 \geq \lambda_2 \geq \cdots \geq 0$. This representation will be used next to define the independent component model in $\mathcal{H}$.

\section{Independent component analysis in $\mathcal{H}$}\label{sec:independent}

\subsection{Independent component model in $\mathcal{H}$}

We say that $X \in \mathcal{X}^2 (\mathcal{H})$ follows the $\mathcal{H}$-valued independent component model if there exists a matrix of operators $\Gamma = ( \Gamma_{ij} )_{i,j=1}^p \in \mathcal{L}(\mathcal{H})$ such that
\begin{equation}\label{eq:icm_function_1}
\Gamma X =  Z,
\end{equation}
where $Z = (Z_1, \ldots, Z_p)$ is a random element in $\mathcal{H}$ having mutually independent component functions. We define two random elements $X: \Omega \rightarrow \mathcal{G}_1$ and $Y: \Omega \rightarrow \mathcal{G}_2$, not necessarily having values in the same space, to be independent if $E \{ \textbf{q}_1(X) \textbf{q}_2(Y)^T \} = E \{ \textbf{q}_1(X) \} E \{ \textbf{q}_2(Y)^T \}$ for all $\textbf{q} \lo 1(X) \in \ca X \hi 2 ( \mathbb{R}^{p_1})$, $\textbf{q} \lo 2 (Y) \in \ca X \hi 2 (\mathbb{R}^{p_2})$ with $p_1, p_2 \in \mathbb{N}$. The objective in the $\mathcal{H}$-valued independent component analysis is to estimate some unmixing operator $\Gamma$ such that $\Gamma X$ has independent component functions.

Like its vector-valued analogy in \eqref{eq:icm_vector}, the operator $\Gamma$ in model (\ref{eq:icm_function_1}) is not uniquely defined. If one applies to both sides of \eqref{eq:icm_function_1} any diagonal operator $D \in \mathcal{L}(\mathcal{H})$, the right-hand side still retains independent component functions. This implies that, without further assumptions, we cannot hope to find any unique functional form for the component functions. Indeed, as we show later in this section, our proposed methods actually estimate $\{B_j Z_j\}_{j=1}^p$ where $B_j \in \mathcal{L}(\mathcal{H}_j)$, $j=1, \ldots , p$. However, this identifiability issue does not affect our goal of discovering independent components, as the resulting vector of functions has independent component functions regardless of the form of $D$.

We will next approach the problem by extending two methods of vector-valued independent component analysis, FOBI and JADE, to the case of vector-valued random functions.

\subsection{Standardization of a random vector-valued function}

The first step in vector-valued independent component analysis is the standardization of $\textbf{x}$ by the inverse square root of the covariance matrix $\mathrm{cov}(\textbf{x})$. However, like in \cite{li2015functional}, the fact that the inverses of compact operators are unbounded means that we must resort to additional assumptions. Let $\{ \phi_k \}_{k=1}^\infty$ be the orthonormal basis of $\mathcal{H}$ consisting of the eigenvectors of $\Sigma \lo {XX}$ in decreasing order according to the corresponding eigenvalues. For a fixed $d \in \mathbb{N}$, let $\mathcal{M}_d = \mbox{span}(\{ \phi_k \}_{k=1}^d)$ be the subspace of $\mathcal{H}$ spanned by the $d$ first eigenvectors of $\Sigma \lo {XX}$. The simplifying assumption we make is the following.

\begin{assumption}\label{assu:inf_to_fin}
The component functions of $X$ are dependent only along the $d$ orthogonal directions $\{ \phi_k \}_{k=1}^d$. That is, if $R \lo d =  \tsum \lo {k=d+1} \hi \infty \langle X, \phi \lo k\rangle \lo {\ca H} \phi_k$, then the $p$ components of $R \lo d$ are independent.
\end{assumption}

In vector-valued independent component analysis this assumption is naturally always satisfied by picking simply $d = p$. One interpretation for the assumption in the current case is that the majority of the structure of the independent component functions is noise, meaning that the signal in the function $Z$ is in some sense finite-dimensional.


Note that $\{ \phi \lo k \}$ may not span the entire $\ca H$. However, by definition they are guaranteed to span $\cran (\Sigma \lo {XX})$, the closure of the range space of $\Sigma \lo {XX}$. Because $\Sigma \lo {XX}$ is self-adjoint,  $\cran(\Sigma \lo {XX} ) \oc = \ker ( \Sigma \lo {XX} )$, the kernel space of $\Sigma \lo {XX}$. Meanwhile, for any $f \in \ker ( \Sigma \lo {XX} )$, we have
\begin{align*}
\langle f, \Sigma \lo {XX} f \rangle \lo {\ca H} = \var \left\{ \langle f, X \rangle \lo {\ca H} \right\} = 0,
\end{align*}
which implies that $\langle f, X \rangle \lo {\ca H} = \mbox{constant}$ almost surely. Since this holds for the special case $X (\omega) = 0$, we have $\langle f, X \rangle \lo {\ca H} = 0$ almost surely. This means $f$ is orthogonal to the support of $X$. Since such  functions are of no interest to us, we can, without loss of generality, reset $\ca H$ to be $\cran (\Sigma \lo {XX})$, as we will do for the rest of the paper.


For an arbitrary subspace $\mathcal{M} \subset \mathcal{H}$, let $P_\mathcal{M}$ and $Q_\mathcal{M}$ denote the orthogonal projections on to $\mathcal{M}$ and $\mathcal{M}^\perp$, respectively. Then Assumption \ref{assu:inf_to_fin}    says that we can without loss of generality consider the projections $X^{(d)} = P_{\mathcal{M}_d} X$ instead of the original observations $X$. This simplifies the model \eqref{eq:icm_function_1} to the form
\begin{equation}\label{eq:icm_function_2}
\Gamma_0 X^{(d)} =  Z^{(d)},
\end{equation}
where $X^{(d)}, Z^{(d)}$ are random functions in $\mathcal{M}_d$, the component functions of $Z^{(d)}$ are independent and $\Gamma_0 \in \mathcal{L}(\mathcal{M}_d)$ is assumed to be invertible.

\begin{remark}
Later in this section the proposed methods are shown to be Fisher consistent, meaning that under the model \eqref{eq:icm_function_1} and Assumption \ref{assu:inf_to_fin} the final independent component scores are invariant to injective transformations $X \mapsto (P_{\mathcal{M}_d} A P_{\mathcal{M}_d} + Q_{\mathcal{M}_d}) X$, where $A \in \mathcal{L}(\mathcal{M}_d)$. However, as our estimation methods crucially depend on the existence of a random function $Z$ it is not meaningful to speak of affine equivariance outside the model in the same general sense that holds for both vector-valued FOBI and JADE, see \cite{miettinen2015fourth}.
\end{remark}

With this, we are now ready to present the first step towards the estimation of $Z$, an analogy for Lemma 3 in \cite{li2015functional}. In the following, for an $A \in \ca L ( \ca M \lo d)$, let $A^{- 1/2}$ denote the self-adjoint inverse square root of the self-adjoint linear operator $A$ within $\ca M \lo d$, that is, $A^{- 1/2} A A^{- 1/2} = P \lo {\ca M \lo d}$.

\begin{lemma} \label{lem:standardization}
Assume that $X\in \mathcal{X}^2(\mathcal{H})$ follows the model \eqref{eq:icm_function_2}. Then
\begin{equation*}
\Sigma(X^{(d)})^{- 1/2} X^{(d)} = U_0 \Sigma(Z^{(d)})^{- 1/2} Z^{(d)},
\end{equation*}
for some unitary operator $U_0 \in \mathcal{L}(\mathcal{M}_d)$.
\end{lemma}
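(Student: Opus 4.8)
The plan is to treat this as the functional analogue of the classical whitening identity, carried out entirely inside the finite-dimensional subspace $\mathcal{M}_d$ where the inverse square roots cause no trouble. First I would record the setup: since $\mathcal{M}_d = \mathrm{span}(\{\phi_k\}_{k=1}^d)$ is $d$-dimensional and, after resetting $\mathcal{H}$ to $\cran(\Sigma_{XX})$, the top $d$ eigenvalues of $\Sigma_{XX}$ are strictly positive, the restriction of $\Sigma(X^{(d)}) = P_{\mathcal{M}_d}\Sigma_{XX}P_{\mathcal{M}_d}$ to $\mathcal{M}_d$ is a positive-definite, hence boundedly invertible, operator in $\mathcal{L}(\mathcal{M}_d)$. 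Consequently $\Sigma(X^{(d)})^{-1/2}$ and its inverse, the square root $\Sigma(X^{(d)})^{1/2}$, are well-defined self-adjoint operators on $\mathcal{M}_d$. I would stress that the independence of the components of $Z^{(d)}$ plays no role in this lemma; only the invertibility of $\Gamma_0$ and affine equivariance are used.

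Next I would invoke the affine equivariance of the covariance matrix operator (part ii of Lemma \ref{lem:S_properties}): applied to $Z^{(d)} = \Gamma_0 X^{(d)}$ with the invertible $\Gamma_0 \in \mathcal{L}(\mathcal{M}_d)$, it yields $\Sigma(Z^{(d)}) = \Gamma_0 \Sigma(X^{(d)}) \Gamma_0^*$. Since $\Gamma_0$ and $\Sigma(X^{(d)})$ are both invertible on $\mathcal{M}_d$, so is $\Sigma(Z^{(d)})$, and therefore $\Sigma(Z^{(d)})^{-1/2}$ and $\Sigma(Z^{(d)})^{1/2}$ are likewise well-defined self-adjoint operators on $\mathcal{M}_d$.

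With these in hand I would exhibit $U_0$ explicitly. Substituting $X^{(d)} = \Gamma_0^{-1} Z^{(d)}$ into the left-hand side gives $\Sigma(X^{(d)})^{-1/2} X^{(d)} = \Sigma(X^{(d)})^{-1/2}\Gamma_0^{-1}\Sigma(Z^{(d)})^{1/2}\,\Sigma(Z^{(d)})^{-1/2}Z^{(d)}$, so the natural candidate is
\[
U_0 = \Sigma(X^{(d)})^{-1/2}\,\Gamma_0^{-1}\,\Sigma(Z^{(d)})^{1/2},
\]
an operator in $\mathcal{L}(\mathcal{M}_d)$ for which the claimed identity holds by construction. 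It then remains to verify that $U_0$ is unitary, i.e. $U_0^* U_0 = U_0 U_0^* = P_{\mathcal{M}_d}$. Using $(\Gamma_0^{-1})^* = (\Gamma_0^*)^{-1}$ and the self-adjointness of the (inverse) square roots, $U_0^* U_0$ expands to $\Sigma(Z^{(d)})^{1/2}(\Gamma_0^*)^{-1}\Sigma(X^{(d)})^{-1}\Gamma_0^{-1}\Sigma(Z^{(d)})^{1/2}$; inverting the equivariance relation to get $\Sigma(X^{(d)})^{-1} = \Gamma_0^*\Sigma(Z^{(d)})^{-1}\Gamma_0$ and cancelling collapses this to $P_{\mathcal{M}_d}$, with the computation for $U_0 U_0^*$ being symmetric.

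The routine parts are the algebraic cancellations; the only genuine care needed is the bookkeeping on $\mathcal{M}_d$ — keeping the identity as $P_{\mathcal{M}_d}$ rather than the identity of the ambient $\mathcal{H}$, confirming that every operator I invert is actually invertible as a map of $\mathcal{M}_d$, and checking that adjoints and inverses commute as expected there. Because $\mathcal{M}_d$ is finite-dimensional, this is the main and only mild obstacle: once invertibility on $\mathcal{M}_d$ is secured, the argument is formally identical to the Euclidean whitening result.
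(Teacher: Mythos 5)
Your proof is correct and follows essentially the same route as the paper's: both rest on the affine equivariance identity $\Sigma(Z^{(d)}) = \Gamma_0\,\Sigma(X^{(d)})\,\Gamma_0^*$ and verify unitarity by direct cancellation, and your explicit $U_0 = \Sigma(X^{(d)})^{-1/2}\Gamma_0^{-1}\Sigma(Z^{(d)})^{1/2}$ is precisely the adjoint (equivalently, the inverse) of the operator $A_0 = \Sigma(Z^{(d)})^{-1/2}\Gamma_0\Sigma(X^{(d)})^{1/2}$ that the paper shows to be unitary before setting $U_0 = A_0^*$. The only cosmetic difference is that you build $U_0$ from the $X$ side using $\Gamma_0^{-1}$ while the paper works from the $Z$ side and transposes at the end.
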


The standardized functions are in the following denoted by $\tilde{X} = \Sigma(X^{(d)})^{-1/2} X^{(d)}$ and $\tilde{Z} = \Sigma(Z^{(d)})^{-1/2} Z^{(d)}$ and naturally satisfy $\Sigma ( \tilde{X} ) = \Sigma ( \tilde{Z} ) =  P \lo {\ca M \lo d}$. The next step towards finding $Z$ is the estimation of the unknown unitary operator $U_0$ in Lemma \ref{lem:standardization}. As described in the introduction both FOBI and JADE approach it via matrices of fourth cross-cumulants and before continuing we first define operatorial counterparts for them.

\subsection{The fourth cross-cumulant operators $C^{ij}(X)$}

In this section we assume that the zero-mean random function $X \in \mathcal{X}^4 (\mathcal{M}_d)$ resides in the $d$-dimensional space $\mathcal{M}_d$ spanned by the fixed orthonormal basis $\{ \phi_k \}_{k=1}^d$. We define the $(i, j)$th fourth cross-cumulant of $X$ with respect to the basis $\{ \phi_k \}_{k=1}^d$ to be
\begin{align*}
C^{ij}(X) = &E \left\{ \langle X, \phi_i \rangle_\mathcal{H} \langle X, \phi_j \rangle_\mathcal{H} ( X \otimes X ) \right\} - E \left\{ \langle X', \phi_i \rangle_\mathcal{H} \langle X', \phi_j \rangle_\mathcal{H} ( X \otimes X ) \right\} \numberthis \label{eq:cumulant_operator_1} \\
- &E \left\{ \langle X', \phi_i \rangle_\mathcal{H} \langle X, \phi_j \rangle_\mathcal{H} ( X' \otimes X ) \right\} - E \left\{ \langle X', \phi_i \rangle_\mathcal{H} \langle X, \phi_j \rangle_\mathcal{H} ( X \otimes X' ) \right\},
\end{align*}
where $i,j = 1, \ldots , d$ and the random function $X'$ is an independent copy of $X$. Repeated application of the Cauchy-Schwarz inequality shows that, for example, the first term in \eqref{eq:cumulant_operator_1} satisfies
\[
E \left\{ \| \langle X, \phi_i \rangle_\mathcal{H} \langle X, \phi_j \rangle_\mathcal{H} ( X \otimes X ) \|\lo {\op} \right\} \leq E \left( \| X \|^4_\mathcal{H} \right) < \infty,
\]
implying that the first term of \eqref{eq:cumulant_operator_1} exists as a uniquely defined bounded linear operator in $\mathcal{M}_d$. Similar considerations for the other terms show that the operator $C^{ij}(X) \in \mathcal{L}(\mathcal{M}_d)$ is then well-defined. Our main interest is in standardized random functions, $\Sigma(X) = P \lo {\ca M \lo d}$, and the following lemma provides a simplified form for \eqref{eq:cumulant_operator_1} in that case.
\begin{lemma}\label{lem:cumulant_operator_2}
Let the zero-mean random function $X \in \mathcal{X}^4 (\mathcal{M}_d)$ satisfy $\Sigma(X) = P \lo {\ca M \lo d} $. Then we have
\[C^{ij}(X) = E \left\{ \langle X, \phi_i \rangle_\mathcal{H} \langle X, \phi_j \rangle_\mathcal{H} ( X \otimes X ) \right\} - \delta_{ij} P \lo {\ca M \lo d} - \phi_i \otimes \phi_j - \phi_j \otimes \phi_i.\]
\end{lemma}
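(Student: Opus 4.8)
The plan is to evaluate separately the three ``correction'' expectations in the definition \eqref{eq:cumulant_operator_1} --- the second, third and fourth terms --- and to show that under the standardization $\Sigma(X) = P_{\mathcal{M}_d}$ they collapse to $\delta_{ij} P_{\mathcal{M}_d}$, $\phi_i \otimes \phi_j$ and $\phi_j \otimes \phi_i$, respectively, while the first term is carried over verbatim into the conclusion. So no work is needed on the leading term $E\{\langle X, \phi_i \rangle_\mathcal{H} \langle X, \phi_j \rangle_\mathcal{H} ( X \otimes X )\}$; everything reduces to identifying the remaining three.

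The key technical device is to stop working at the level of operators and instead test each expectation against an arbitrary pair $f, g \in \mathcal{M}_d$, using the weak definition of the expectation of a random operator, $\langle f, E(W) g \rangle_\mathcal{H} = E \langle f, Wg \rangle_\mathcal{H}$. Each correction term then becomes a scalar expectation over the pair $(X, X')$, and because $X'$ is an independent copy of $X$ the independence criterion adopted in Section \ref{sec:independent} lets me factor that scalar expectation into a product of two expectations, one in $X'$ and one in $X$. For example, applying the tensor action $(X' \otimes X) g = \langle X, g \rangle_\mathcal{H} X'$ in the third term turns it into $E\{\langle X', \phi_i \rangle_\mathcal{H} \langle f, X' \rangle_\mathcal{H}\}\, E\{\langle X, \phi_j \rangle_\mathcal{H} \langle X, g \rangle_\mathcal{H}\}$, with the second and fourth terms handled by the analogous rewritings $(X \otimes X)g = \langle X, g\rangle_\mathcal{H} X$ and $(X \otimes X')g = \langle X', g\rangle_\mathcal{H} X$.

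Each resulting scalar factor is a covariance of two linear functionals of $X$ (or $X'$), and here standardization does all the work: the identity $\langle f, \Sigma_{XX} g \rangle_\mathcal{H} = E\{\langle f, X \rangle_\mathcal{H} \langle X, g \rangle_\mathcal{H}\}$ combined with $\Sigma_{XX} = P_{\mathcal{M}_d}$ reduces every such factor to an inner product of members of $\mathcal{M}_d$. Thus the two factors in the third term evaluate to $\langle f, \phi_i \rangle_\mathcal{H}$ and $\langle \phi_j, g \rangle_\mathcal{H}$, whose product is exactly $\langle f, (\phi_i \otimes \phi_j) g \rangle_\mathcal{H}$ via $(\phi_i \otimes \phi_j) g = \langle \phi_j, g\rangle_\mathcal{H} \phi_i$; the second term gives $\delta_{ij} \langle f, g \rangle_\mathcal{H} = \langle f, \delta_{ij} P_{\mathcal{M}_d} g \rangle_\mathcal{H}$ and the fourth gives $\langle f, (\phi_j \otimes \phi_i) g \rangle_\mathcal{H}$. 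Since $f, g \in \mathcal{M}_d$ are arbitrary and $P_{\mathcal{M}_d}$ is self-adjoint, the three operator identities follow, and summing the four contributions yields the stated simplification.

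I expect the only genuine subtlety to lie in the factorization step: I must confirm that for independent random elements the expectation of a product of square-integrable scalar functionals splits into the product of the expectations. This is precisely what the definition of independence used in the paper guarantees, taking $\textbf{q}_1$ and $\textbf{q}_2$ to be the relevant pairs of continuous linear functionals, and the integrability needed to invoke it is already secured by $X \in \mathcal{X}^4(\mathcal{M}_d)$ together with the Cauchy--Schwarz bounds recorded just before the lemma. Everything else is routine bookkeeping of the tensor-product action $(a \otimes b) c = \langle b, c \rangle_\mathcal{H}\, a$.
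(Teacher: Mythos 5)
Your proposal is correct and follows essentially the same route as the paper's proof: testing each correction term against arbitrary $f, g \in \mathcal{M}_d$ via the weak definition of the expected random operator, factoring by the independence of $X$ and $X'$, and reducing each factor to an inner product using $\Sigma(X) = P_{\mathcal{M}_d}$. The only difference is that the paper works out just the second term and declares the third and fourth ``similar,'' whereas you sketch all three explicitly (with the correct identifications $\delta_{ij}P_{\mathcal{M}_d}$, $\phi_i \otimes \phi_j$, $\phi_j \otimes \phi_i$), which is if anything slightly more complete.
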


The operator $C^{ij}$ in Lemma \ref{lem:cumulant_operator_2} closely resembles the cross-cumulant matrix \eqref{eq:cumulant_matrix} for standardized random vectors and is next shown to serve similar purposes in constructing our versions of FOBI and JADE in $\mathcal{H}$.

\begin{theorem}\label{theo:Cij_properties}
Assume that $Z \in \mathcal{X}^4(\mathcal{M}_d)$ has independent component functions and that $\Sigma(Z) = P \lo {\ca M \lo d}$. Then we have for any unitary matrix of operators $U = ( U_{kl} )_{k,l=1}^p \in \mathcal{L}(\mathcal{M}_d)$ and for any $i,j = 1, \ldots , d$:
\[ C^{ij}(U Z) = U D^{ij} U^*,\]
where $D^{ij} = D^{ij}(U,Z)$ is a diagonal matrix of operators with the diagonal operators
\begin{align*}
D^{ij}_{kk} = E \left\{ (Z_k \otimes Z_k) (\xi_{ik} \otimes \xi_{jk}) (Z_k \otimes Z_k) \right\} - \langle \xi_{ik}, \xi_{jk} \rangle_k P_k - (\xi_{ik} \otimes \xi_{jk}) - (\xi_{jk} \otimes \xi_{ik}), \numberthis \label{eq:Cij_diagonal}
\end{align*}
for $k = 1, \ldots , p$, where $\xi_i = (\xi_{i1}, \ldots , \xi_{ip}) = U^* \phi_i$ and $P_k$ is the projection operator from the $k$th component space of $\mathcal{H}$ to the $k$th component space of $\mathcal{M}_d$.
\end{theorem}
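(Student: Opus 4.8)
The plan is to reduce $C^{ij}(UZ)$ to the form in Lemma~\ref{lem:cumulant_operator_2} and then carry out a block-wise moment computation. First I would observe that $UZ$ is again standardized: since $U$ is unitary on $\mathcal{M}_d$ and $\Sigma(Z) = P_{\mathcal{M}_d}$, the affine equivariance in Lemma~\ref{lem:S_properties}(ii) gives $\Sigma(UZ) = U P_{\mathcal{M}_d} U^* = P_{\mathcal{M}_d}$, so Lemma~\ref{lem:cumulant_operator_2} applies verbatim to $UZ$. Using $\langle UZ, \phi_i \rangle_{\mathcal{H}} = \langle Z, U^*\phi_i \rangle_{\mathcal{H}} = \langle Z, \xi_i\rangle_{\mathcal{H}}$ and the identity $UZ \otimes UZ = U(Z\otimes Z)U^*$, together with the unitary relations $UU^* = U^*U = P_{\mathcal{M}_d}$ (which let me write $P_{\mathcal{M}_d} = U P_{\mathcal{M}_d} U^*$ and $\phi_i \otimes \phi_j = U(\xi_i \otimes \xi_j)U^*$ on $\mathcal{M}_d$), I would factor $U$ and $U^*$ out of every term to arrive at $C^{ij}(UZ) = U D^{ij} U^*$ with
\begin{equation*}
D^{ij} = E\{\langle Z, \xi_i\rangle_{\mathcal{H}} \langle Z, \xi_j\rangle_{\mathcal{H}} (Z\otimes Z)\} - \delta_{ij} P_{\mathcal{M}_d} - \xi_i \otimes \xi_j - \xi_j \otimes \xi_i .
\end{equation*}

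The substance of the theorem is then to show this $D^{ij}$ is diagonal with the stated diagonal blocks. I would compute $D^{ij}$ block by block, writing $\langle Z, \xi_i\rangle_{\mathcal{H}} = \sum_{a=1}^p \langle Z_a, \xi_{ia}\rangle_a$ and recalling that the $(k,l)$ block of $Z\otimes Z$ is $Z_k \otimes Z_l$. The $(k,l)$ block of the leading term then expands into a sum over $a,b$ of $E\{\langle Z_a, \xi_{ia}\rangle_a \langle Z_b, \xi_{jb}\rangle_b (Z_k \otimes Z_l)\}$. Because the $Z_m$ are mutually independent and zero-mean, any term in which some component index appears an odd number of times among $\{a,b,k,l\}$ vanishes, leaving only the fully paired configurations. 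A preliminary remark I would record is that $P_{\mathcal{M}_d}$ is block-diagonal with diagonal blocks $P_k$: indeed $\Sigma(Z) = P_{\mathcal{M}_d}$ and independence give, via Lemma~\ref{lem:S_properties}(iii), $\Sigma_{Z_k Z_l} = 0$ for $k \neq l$ and $\Sigma_{Z_k Z_k} = P_k$, and in particular $P_k \xi_{ik} = \xi_{ik}$ for every $\xi_i = U^*\phi_i \in \mathcal{M}_d$.

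For the off-diagonal blocks $k \neq l$ the only surviving pairings are $(a=k,\,b=l)$ and $(a=l,\,b=k)$, which contribute $\xi_{ik}\otimes\xi_{jl}$ and $\xi_{jk}\otimes\xi_{il}$ respectively, by factoring expectations and using $E(\langle Z_k, \xi_{ik}\rangle_k Z_k) = P_k\xi_{ik} = \xi_{ik}$. Since $P_{\mathcal{M}_d}$ contributes nothing off the diagonal while $\xi_i\otimes\xi_j$ and $\xi_j\otimes\xi_i$ contribute exactly $\xi_{ik}\otimes\xi_{jl}$ and $\xi_{jk}\otimes\xi_{il}$, these cancel and the block is zero. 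For the diagonal blocks $k=l$ the surviving pairings are $a=b=k$, giving the fourth-order operator $E\{(Z_k\otimes Z_k)(\xi_{ik}\otimes\xi_{jk})(Z_k\otimes Z_k)\}$ via the collapse $\langle Z_k,\xi_{ik}\rangle_k\langle Z_k,\xi_{jk}\rangle_k (Z_k\otimes Z_k) = (Z_k\otimes Z_k)(\xi_{ik}\otimes\xi_{jk})(Z_k\otimes Z_k)$, and $a=b\neq k$, giving $\sum_{a\neq k}\langle\xi_{ia},\xi_{ja}\rangle_a P_k$.

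Subtracting the remaining blocks $\delta_{ij}P_k + \xi_{ik}\otimes\xi_{jk} + \xi_{jk}\otimes\xi_{ik}$ reduces $D^{ij}_{kk}$ to the claimed expression once I verify $\sum_{a\neq k}\langle\xi_{ia},\xi_{ja}\rangle_a - \delta_{ij} = -\langle\xi_{ik},\xi_{jk}\rangle_k$; this is immediate from $\sum_{a=1}^p \langle\xi_{ia},\xi_{ja}\rangle_a = \langle \xi_i,\xi_j\rangle_{\mathcal{H}} = \langle U^*\phi_i, U^*\phi_j\rangle_{\mathcal{H}} = \langle\phi_i,\phi_j\rangle_{\mathcal{H}} = \delta_{ij}$, using unitarity of $U$ and orthonormality of the $\phi$'s. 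Finally I would note each surviving diagonal term is self-adjoint (the fourth-order term is a real scalar times $Z_k\otimes Z_k$, and $\xi_{ik}\otimes\xi_{jk} + \xi_{jk}\otimes\xi_{ik}$ is symmetric), so $D^{ij}$ meets the definition of a diagonal matrix of operators. The main obstacle is the bookkeeping in the moment expansion — correctly enumerating the surviving index pairings under independence and evaluating each as the right tensor operator; the off-diagonal cancellation and the collapse $\sum_a\langle\xi_{ia},\xi_{ja}\rangle_a = \delta_{ij}$ are the crux that makes everything fit.
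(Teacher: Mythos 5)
Your proposal is correct and follows essentially the same route as the paper's proof: apply Lemma \ref{lem:cumulant_operator_2} to the standardized $UZ$, factor $U$ and $U^*$ out of every term, and evaluate the resulting operator block by block, where independence and zero means kill all but the fully paired index configurations, with the off-diagonal contributions cancelling against $\xi_i \otimes \xi_j + \xi_j \otimes \xi_i$ and the identity $\sum_a \langle \xi_{ia}, \xi_{ja}\rangle_a = \delta_{ij}$ producing the $-\langle \xi_{ik},\xi_{jk}\rangle_k P_k$ term. If anything, you are slightly more explicit than the paper on points it leaves implicit (that $\Sigma(UZ) = P_{\mathcal{M}_d}$, that $P_{\mathcal{M}_d}$ is block-diagonal with blocks $P_k$, and that the diagonal blocks are self-adjoint), so the argument stands as written.
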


Theorem \ref{theo:Cij_properties} essentially says that $U^*$ diagonalizes (as in a diagonal operator) the operator $C^{ij}(U Z)$ for every choice of $i,j=1, \ldots ,d$ and these decompositions provide us a mean of finding the missing unitary operator $U$. Our version of JADE will later utilize all $p^2$ of these operators and for FOBI we use just a subset of them, captured by the FOBI-operator $C(X) \in \mathcal{L}(\mathcal{M}_d)$,
\begin{equation}\label{eq:C_definition}
C(X) = \sum_{i=1}^d C^{ii}(X).
\end{equation}
The next theorem gives some useful properties of this operator.

\begin{theorem}\label{theo:C_properties}
Assume that $Z \in \mathcal{X}^4(\mathcal{M}_d)$ has independent component functions and that $\Sigma(Z) = P \lo {\ca M \lo d}$. Then, for any unitary matrix of operators $U = ( U_{kl} )_{k,l=1}^p \in \mathcal{L}(\mathcal{M}_d)$, the FOBI-operator \eqref{eq:C_definition} satisfies
\[C(UZ) = U C(Z) U^* = U  D  U^*, \]
where $D = C(Z)$ is a diagonal matrix of operators with the diagonal entries
\[
D_{kk} = E \left\{ (Z_k \otimes Z_k)^2 \right\} - (d_k + 2) P_k,
\]
where $d_k = \mbox{dim}[ \mbox{span}(\{ \phi_{mk} \}_{m=1}^d)]$, the dimension of the $k$th component space, and $P_k$ is the projection operator from the $k$th component space of $\mathcal{H}$ to the $k$th component space of $\mathcal{M}_d$.
\end{theorem}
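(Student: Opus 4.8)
The plan is to deduce this theorem from Theorem \ref{theo:Cij_properties} together with a careful bookkeeping of the diagonal blocks. First I would write, using the definition \eqref{eq:C_definition} and the linearity of the map $L \mapsto ULU^*$,
\[
C(UZ) = \sum_{i=1}^d C^{ii}(UZ) = \sum_{i=1}^d U D^{ii} U^* = U \Big( \sum_{i=1}^d D^{ii} \Big) U^*,
\]
where each $D^{ii} = D^{ii}(U,Z)$ is the diagonal operator obtained by setting $j=i$ in \eqref{eq:Cij_diagonal}. A sum of diagonal matrices of operators is again diagonal, so $D := \sum_{i=1}^d D^{ii}$ is diagonal and it remains to identify its $k$th diagonal block $D_{kk} = \sum_{i=1}^d D^{ii}_{kk}$ and to check that the result does not depend on $U$; the latter is exactly the equivariance $C(UZ) = UC(Z)U^*$, recovered afterwards by specializing to $U = I$.

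The key structural input, which I would isolate first, is the behaviour of the families $\{\xi_{ik}\}_{i=1}^d$, where $\xi_i = (\xi_{i1},\dots,\xi_{ip}) = U^*\phi_i$. Since $U^*$ is unitary on $\mathcal{M}_d$ and $\{\phi_i\}_{i=1}^d$ is an orthonormal basis of $\mathcal{M}_d$, so is $\{\xi_i\}_{i=1}^d$, whence $\sum_{i=1}^d \xi_i \otimes \xi_i = P_{\mathcal{M}_d}$; reading off the $(k,k)$ block gives $\sum_{i=1}^d \xi_{ik}\otimes\xi_{ik} = (P_{\mathcal{M}_d})_{kk}$. Now I would use that $Z$ has independent, mean-zero components with $\Sigma(Z) = P_{\mathcal{M}_d}$: by part iii of Lemma \ref{lem:S_properties} the off-diagonal blocks $\Sigma_{Z_kZ_l}$ ($k\neq l$) vanish, so $P_{\mathcal{M}_d}$ is block diagonal; being a self-adjoint idempotent it then has self-adjoint idempotent diagonal blocks, and the $k$th of these is precisely the orthogonal projection $P_k$ onto the $k$th component space of $\mathcal{M}_d$. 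Hence $\sum_{i=1}^d \xi_{ik}\otimes\xi_{ik} = P_k$. Two consequences I would record: taking traces gives $\sum_{i=1}^d \|\xi_{ik}\|_k^2 = \mathrm{tr}(P_k) = d_k$, and, because $Z_k \in \mathrm{ran}(P_k)$, $\sum_{i=1}^d \langle \xi_{ik}, Z_k\rangle_k^2 = \langle Z_k, P_k Z_k\rangle_k = \|Z_k\|_k^2$.

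With these in hand the three groups of terms in $D_{kk}=\sum_i D^{ii}_{kk}$ collapse. For the leading term I would use the elementary identity $(Z_k\otimes Z_k)(\xi_{ik}\otimes\xi_{ik})(Z_k\otimes Z_k) = \langle \xi_{ik},Z_k\rangle_k^2\,(Z_k\otimes Z_k)$; summing over $i$ and invoking $\sum_i\langle\xi_{ik},Z_k\rangle_k^2 = \|Z_k\|_k^2$ turns the first term into $E\{\|Z_k\|_k^2 (Z_k\otimes Z_k)\} = E\{(Z_k\otimes Z_k)^2\}$. The second group is $-\big(\sum_i\|\xi_{ik}\|_k^2\big)P_k = -d_k P_k$, and the third is $-2\sum_i \xi_{ik}\otimes\xi_{ik} = -2P_k$. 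Adding these yields $D_{kk} = E\{(Z_k\otimes Z_k)^2\} - (d_k+2)P_k$, which is manifestly free of $U$; setting $U=I$ then identifies $D$ with $C(Z)$ and completes the argument. I expect the only real obstacle to be the middle paragraph: the component families $\{\xi_{ik}\}_{i=1}^d$ are generally not orthonormal in $\mathcal{H}_k$, so the simplifications must be routed through the orthonormal basis $\{\xi_i\}$ of $\mathcal{M}_d$ and the block-diagonality of $P_{\mathcal{M}_d}$ forced by the independence of the components of $Z$, rather than attempted inside the single space $\mathcal{H}_k$.
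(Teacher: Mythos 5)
Your proof is correct, and it takes precisely the route that the paper's proof explicitly sidesteps: summing the $U$-dependent diagonal blocks $D^{ii}_{kk}$ from Theorem \ref{theo:Cij_properties} and showing that the $U$-dependence cancels. Your key device is the resolution of the identity $\sum_{i=1}^d \xi_i \otimes \xi_i = P_{\mathcal{M}_d}$ for the orthonormal basis $\xi_i = U^*\phi_i$ of $\mathcal{M}_d$, combined with the observation that independence of the components and $\Sigma(Z) = P_{\mathcal{M}_d}$ (via part iii of Lemma \ref{lem:S_properties}) force $P_{\mathcal{M}_d}$ to be block diagonal with $k$th block $P_k$; the resulting sum rules $\sum_i \xi_{ik}\otimes\xi_{ik} = P_k$, $\sum_i\|\xi_{ik}\|_k^2 = d_k$ and $\sum_i \langle \xi_{ik},Z_k\rangle_k^2 = \|Z_k\|_k^2$ collapse the three groups of terms in \eqref{eq:Cij_diagonal} to $E\{(Z_k\otimes Z_k)^2\} - (d_k+2)P_k$, which is free of $U$. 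The paper instead first applies Parseval's identity to obtain the basis-free form $C(X) = E\{(X\otimes X)^2\} - (d+2)P_{\mathcal{M}_d}$, valid for \emph{every} standardized $X \in \mathcal{X}^4(\mathcal{M}_d)$; the equality $C(UZ)=UC(Z)U^*$ is then immediate, diagonality of $C(Z)$ follows from Theorem \ref{theo:Cij_properties}, and the blocks are computed from $E\{(Z\otimes Z)^2\}$ by splitting the component sum into $j=k$ and $j\neq k$ and using $E(\|Z\|_{\mathcal{H}}^2)=d$ and $E(\|Z_k\|_k^2)=d_k$. Comparing the two: your argument makes explicit a structural fact the paper uses only implicitly, namely that under independence $(P_{\mathcal{M}_d})_{kk}=P_k$ (you should add the one-line verification that $\mathrm{ran}\{(P_{\mathcal{M}_d})_{kk}\} = \mathrm{span}(\{\phi_{mk}\}_{m=1}^d)$: the inclusion $\supseteq$ holds because $f_k = (P_{\mathcal{M}_d})_{kk}f_k$ for every $f\in\mathcal{M}_d$, and $\subseteq$ because any canonical vector whose $k$th component lies in that range is fixed by $P_{\mathcal{M}_d}$ and hence lies in $\mathcal{M}_d$). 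On the other hand, because your derivation routes everything through Theorem \ref{theo:Cij_properties}, the equivariance $C(UZ)=UC(Z)U^*$ you obtain is tied to the independence assumption, whereas the paper's Parseval argument yields it for all standardized inputs; this stronger unitary equivariance is exactly what justifies the remark following the theorem and is what the paper invokes when merging the unitary factor $V$ with $U_0$ in the proof of Theorem \ref{theo:unmixing} (though there the argument of $C$ is again a unitary image of an independent-component function, so your weaker version would in fact suffice). Both proofs are complete for the theorem as stated.
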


The first equality in Theorem \ref{theo:C_properties} does not need the independence of the component functions of $Z$ but actually holds for all standardized $Z \in \mathcal{X}^4(\mathcal{M}_d)$, as long as the operator $U$ is unitary. This property of the functional $B: \mathcal{X}^4(\mathcal{M}_d) \rightarrow \mathcal{L}(\mathcal{M}_d)$ is called unitary equivariance.

Recall from the introduction that in FOBI we diagonalize a single matrix and in JADE multiple matrices simultaneously. The functional analogy for the former is the spectral decomposition of $C(X)$ and for the latter we define next the joint diagonalization of a set of operators. Namely, define the joint diagonalizer of a finite set of operators, $\mathcal{S} = \{ S_i \mid S_i \in \mathcal{L}(\mathcal{M}_d), i=1, \ldots, I \}$, to be the orthonormal basis $\{ \psi_k \}_{k=1}^d$ of $\mathcal{M}_d$ that maximizes the objective function
\begin{equation}\label{eq:joint_diag}
w \left( \psi_1, \ldots , \psi_d \right) = \sum_{i=1}^I \sum_{k=1}^d \langle \psi_k, S_i \psi_k \rangle_{\mathcal{H}}^2.
\end{equation}

In the previous paragraphs we have discussed two kinds of diagonality, the diagonality in the sense of diagonal operators in Theorems \ref{theo:Cij_properties} and \ref{theo:C_properties} and the diagonality in the sense of the spectral decomposition. The final tool we need for the estimation of the independent functions is a connection between these two concepts. Recall that by a canonical vector we mean any element of $\mathcal{H}$ which has at most one non-zero component. The needed connection is now provided by the next pair of lemmas which show that (under suitable assumptions) the spectral decomposition and joint diagonalization of diagonal operators mimic the eigendecomposition and joint diagonalization of diagonal real matrices in the sense that the spectral decompositions and the joint diagonalizer of a set of diagonal operators consist entirely of canonical vectors.

\begin{lemma}\label{lem:diagonal_spectra}
Let $D \in \mathcal{L}(\mathcal{H})$ be a diagonal matrix of operators with finite rank $d$ and let its spectral decomposition be
\begin{equation*}
D = \sum_{k=1}^d \tau_k (\psi_k \otimes \psi_k)
\end{equation*}
where the eigenvalues $\{ \tau_k \}_{k=1}^d$ are distinct. Then the eigenvectors $\{ \psi_k \}_{k=1}^d$ are canonical.
\end{lemma}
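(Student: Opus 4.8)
The plan is to exploit the block-diagonal structure of $D$ to reduce the eigenvalue equation to the component spaces, and then to turn any eigenvector with two or more nonzero components into a pair of linearly independent eigenvectors sharing a single eigenvalue --- which the distinctness hypothesis forbids.

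First I would record that a diagonal $D$ is self-adjoint, since $(D^*)_{ij} = D_{ji}^* = 0$ for $i \neq j$ and $(D^*)_{ii} = D_{ii}^* = D_{ii}$, and that it acts on $f = (f_1, \ldots, f_p) \in \mathcal{H}$ by $Df = (D_{11}f_1, \ldots, D_{pp}f_p)$. Hence for any eigenpair $(\psi_k, \tau_k)$, writing $\psi_k = (\psi_k^{(1)}, \ldots, \psi_k^{(p)})$, the equation $D\psi_k = \tau_k \psi_k$ decouples into $D_{ii}\psi_k^{(i)} = \tau_k \psi_k^{(i)}$ for each $i = 1, \ldots, p$. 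In particular, every nonzero component $\psi_k^{(i)}$ is itself an eigenvector of the self-adjoint block $D_{ii}$ for the eigenvalue $\tau_k$.

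Next I would use the hypotheses. Since the decomposition $D = \sum_{k=1}^d \tau_k(\psi_k \otimes \psi_k)$ has exactly $d$ terms while $D$ has rank $d$, none of the $\tau_k$ can vanish (a zero coefficient would delete a term and lower the rank); together with distinctness this makes each $\tau_k$ a simple eigenvalue, so that $\ker(D - \tau_k I) = \mathrm{span}\{\psi_k\}$ is one-dimensional. Now suppose, for contradiction, that some $\psi_k$ fails to be canonical, i.e. it has two nonzero components $\psi_k^{(i)} \neq 0$ and $\psi_k^{(j)} \neq 0$ with $i \neq j$. Let $u, v \in \mathcal{H}$ be the canonical vectors whose only nonzero entries are $\psi_k^{(i)}$ (in slot $i$) and $\psi_k^{(j)}$ (in slot $j$), respectively. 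By the componentwise equation above, $Du = \tau_k u$ and $Dv = \tau_k v$, and $u, v$ are linearly independent because they are supported on disjoint components. Thus $\dim \ker(D - \tau_k I) \geq 2$, contradicting simplicity. Hence every $\psi_k$ has at most one nonzero component and is canonical.

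The argument is short, and the only point needing care is the bookkeeping around the spectral decomposition: one must confirm that distinctness of the listed eigenvalues, combined with the rank being exactly $d$, forces each $\tau_k$ to be nonzero and geometrically simple, so that the splitting of a non-canonical eigenvector genuinely produces an additional independent eigenvector within the same eigenspace. I expect this to be the main (if modest) obstacle; the componentwise decoupling of $D$ and the construction of $u$ and $v$ are routine once the block-diagonal action is written out explicitly.
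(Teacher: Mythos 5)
Your proof is correct and takes essentially the same route as the paper's: assume some $\psi_k$ has two nonzero components, split it into the two canonical vectors supported on those slots, note that the diagonal (component-wise) action of $D$ makes each of them a $\tau_k$-eigenvector, and contradict the one-dimensionality of the eigenspace forced by distinctness of the eigenvalues. Your extra bookkeeping --- that rank $d$ forces every $\tau_k \neq 0$, which is what guarantees $\ker(D - \tau_k I)$ is one-dimensional rather than containing the whole kernel of $D$ --- is a point the paper leaves implicit, but it does not change the argument.
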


\begin{lemma}\label{lem:joint_diag}
Let $\mathcal{S} = \{ S_i \}_{i=1}^I$ be a finite collection of bounded linear operators in $\mathcal{M}_d$ and let $\{ \psi_k \}_{k=1}^d$ be an orthonormal basis of $\mathcal{M}_d$. Then we have
\begin{align*}
w \left( \psi_1, \ldots , \psi_d \right) \leq \sum_{i=1}^I \| S_i \|_{HS}^2,
\end{align*}
where $\| \cdot \|_{HS}$ is the Hilbert-Schmidt norm and an equality is reached if and only if each $\psi_k$ is an eigenvector of each $S_i$, $k = 1, \ldots , d$, $i = 1, \ldots , I$. In particular, if all operators in $\mathcal{S}$ are diagonal and share an eigenbasis then the elements of the joint diagonalizer are canonical.
\end{lemma}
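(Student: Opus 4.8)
The plan is to establish the inequality via a Bessel-type argument in the Hilbert-Schmidt inner product, and then characterize the equality case by recognizing the objective as measuring how well the basis aligns with eigenstructure. First I would recall that each bounded operator $S_i$ on the finite-dimensional space $\mathcal{M}_d$ is automatically Hilbert-Schmidt, with $\| S_i \|_{HS}^2 = \sum_{k=1}^d \| S_i \psi_k \|_\mathcal{H}^2$ for any orthonormal basis $\{ \psi_k \}_{k=1}^d$ of $\mathcal{M}_d$, this being the standard basis-independent expression for the squared Hilbert-Schmidt norm. The key observation is then the pointwise bound $\langle \psi_k, S_i \psi_k \rangle_\mathcal{H}^2 \leq \| S_i \psi_k \|_\mathcal{H}^2$, which is simply Cauchy-Schwarz applied to $\langle \psi_k, S_i \psi_k \rangle_\mathcal{H}$ together with $\| \psi_k \|_\mathcal{H} = 1$. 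Summing this over $k$ and $i$ yields
\begin{equation*}
w(\psi_1, \ldots, \psi_d) = \sum_{i=1}^I \sum_{k=1}^d \langle \psi_k, S_i \psi_k \rangle_\mathcal{H}^2 \leq \sum_{i=1}^I \sum_{k=1}^d \| S_i \psi_k \|_\mathcal{H}^2 = \sum_{i=1}^I \| S_i \|_{HS}^2,
\end{equation*}
which is exactly the claimed inequality.

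For the equality characterization I would track when each Cauchy-Schwarz step is tight. Equality in $\langle \psi_k, S_i \psi_k \rangle_\mathcal{H}^2 \leq \| \psi_k \|_\mathcal{H}^2 \| S_i \psi_k \|_\mathcal{H}^2$ holds precisely when $S_i \psi_k$ is a scalar multiple of $\psi_k$, i.e.\ when $\psi_k$ is an eigenvector of $S_i$. Since the overall inequality is a sum of nonnegative terms each dominated by its counterpart, equality in the total holds if and only if equality holds termwise, i.e.\ if and only if $\psi_k$ is an eigenvector of $S_i$ for every $k = 1, \ldots, d$ and every $i = 1, \ldots, I$. This gives the stated iff condition.

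For the final sentence, I would invoke the earlier results directly. If every $S_i$ is diagonal and the operators share a common eigenbasis $\{ \psi_k \}_{k=1}^d$, then choosing the orthonormal basis of $\mathcal{M}_d$ to be this shared eigenbasis attains the upper bound by the equality condition just established; and because each $S_i$ is diagonal, Lemma \ref{lem:diagonal_spectra} shows that the members of any such eigenbasis are canonical. The only subtlety here is that Lemma \ref{lem:diagonal_spectra} as stated assumes distinct eigenvalues, so strictly one invokes it (or its underlying argument, which uses that a diagonal operator maps each component space into itself so an eigenvector with a simple eigenvalue must have a single nonzero component) for the maximizing basis; I would note that the diagonalizer selected by the joint criterion lands among such canonical eigenvectors. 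The main obstacle is therefore not the inequality itself, which is a one-line Cauchy-Schwarz estimate, but phrasing the equality case cleanly enough that the passage to canonicality through Lemma \ref{lem:diagonal_spectra} is rigorous when eigenvalues may coincide across the shared diagonal structure.
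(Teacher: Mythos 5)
Your proof is correct and takes essentially the same route as the paper's: a termwise Cauchy--Schwarz bound $\langle \psi_k, S_i \psi_k \rangle_{\mathcal{H}}^2 \leq \| \psi_k \|_{\mathcal{H}}^2 \| S_i \psi_k \|_{\mathcal{H}}^2$, summation over $i$ and $k$ to recover $\sum_i \| S_i \|_{HS}^2$, and equality precisely when each $S_i \psi_k$ is proportional to $\psi_k$. You in fact go slightly beyond the paper, whose proof stops at the equality characterization and leaves the final clause about canonicality implicit; your caveat that Lemma \ref{lem:diagonal_spectra} assumes distinct eigenvalues (so that passing from a shared diagonal eigenstructure to canonical basis elements needs the componentwise argument rather than a bare citation) is a fair observation about that omission.
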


\subsection{Finding the unitary transformation $U_0$}

Using the previously defined fourth cross-cumulant operators we next formulate the functional counterparts for the steps taken in vector-valued FOBI and JADE to estimate the orthogonal matrix $\textbf{U}$. 

\begin{definition}
Let $X \in \mathcal{X}^4(\mathcal{H})$ follow the model \eqref{eq:icm_function_2}. Then we define
\begin{itemize}
\item[i)] FOBI-basis of $X$ is the set $\{ \psi^F_k \}_{k=1}^d$ of eigenfunctions of the FOBI-operator $C(\tilde{X})$,
\item[ii)] JADE-basis of $X$ is the joint diagonalizer $\{ \psi^J_k \}_{k=1}^d$ of the set of operators $\mathcal{C} = \{ C^{ij}(\tilde{X}) \}_{i,j=1}^d$.
\end{itemize}
\end{definition}

In the next theorem Lemmas \ref{lem:diagonal_spectra} and \ref{lem:joint_diag} are applied respectively to the FOBI-basis and JADE-basis to find $U_0$. However, to guarantee consistency we need to make some additional assumptions which guarantee that the eigenbases are unique up to signs and order. For the FOBI-solution we need the following.
\begin{assumption}\label{assu:fobi_assumption}
The eigenvalues of $C(\tilde{Z})$ are distinct.
\end{assumption}
One consequence of Assumption \ref{assu:fobi_assumption} is that FOBI cannot estimate two latent functions having the same distribution. For JADE the corresponding assumption is much more relaxed but to use Lemma \ref{lem:joint_diag} we first need the additional assumption that all the diagonal operators in Theorem \ref{theo:Cij_properties} share a common eigenbasis.
\begin{assumption}\label{assu:shared_eigenbasis}
The operators $D^{ij}(U_0, \tilde{Z})$, $i,j = 1, \ldots p$, have a common eigenbasis.
\end{assumption}
While this sounds somewhat stringent, in Section \ref{sec:sample} discussing the sample version of the method we show that Assumption \ref{assu:shared_eigenbasis} is in fact not that strict, and is satisfied under some general conditions and choices of $d$. The need for the next assumption guaranteeing the uniqueness of the eigenbasis for JADE now follows directly from the equality condition in Lemma \ref{lem:joint_diag}.
\begin{assumption}\label{assu:jade_assumption}
For each pair $(\psi^J_k, \psi^J_l)$, $k, l = 1, \ldots ,d$, there exists a pair $(i, j)$, $i,j = 1, \ldots d$, such that the eigenvalues of $D^{ij}(U_0, \tilde{Z})$ related to $\psi^J_k$ and $\psi^J_l$ are distinct.
\end{assumption}

The next theorem finally proves the Fisher consistency of our approach by showing how the FOBI-basis and JADE-basis can be used to estimate the independent component functions.

\begin{theorem}\label{theo:unmixing}
Let $X \in \mathcal{X}^4(\mathcal{H})$ follow the model \eqref{eq:icm_function_2} and let $\{ \psi^F_k \}_{k=1}^d$ and $\{ \psi^J_k \}_{k=1}^d$ be the FOBI-basis and JADE-basis of $X$, respectively. Assume further that either Assumption \ref{assu:fobi_assumption} (FOBI) or Assumptions \ref{assu:shared_eigenbasis} and \ref{assu:jade_assumption} (JADE) are satisfied. Then the FOBI and JADE estimators of the latent functions are respectively the $d$ elements of $\mathcal{X}^4(\mathcal{H})$ given as
\begin{equation*}
\hat{Z}^F_k = (\psi^F_k \otimes \psi^F_k) \tilde{X}, \quad k=1, \ldots , d, \quad \mbox{and} \quad \hat{Z}^J_k = (\psi^J_k \otimes \psi^J_k) \tilde{X}, \quad k=1, \ldots , d,
\end{equation*}
where each estimator $\hat{Z}^\bullet_k$ corresponds to exactly one latent function $Z_j$.
\end{theorem}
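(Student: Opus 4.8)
The plan is to leverage the standardization in Lemma~\ref{lem:standardization} to reduce everything to the action of the unknown unitary operator $U_0$, and then to read off its eigen/joint-diagonalizing basis from the diagonal representations supplied by Theorems~\ref{theo:C_properties} and~\ref{theo:Cij_properties}. First I would record that $\tilde X = U_0 \tilde Z$, where $\tilde Z = \Sigma(Z^{(d)})^{-1/2} Z^{(d)}$. Since $Z^{(d)}$ has independent components, Lemma~\ref{lem:S_properties} makes $\Sigma(Z^{(d)})$ block diagonal, so its inverse square root within $\mathcal{M}_d$ is block diagonal and acts componentwise; hence $\tilde Z$ still has independent component functions and satisfies $\Sigma(\tilde Z) = P_{\mathcal{M}_d}$. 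This puts $\tilde Z$ into the exact form required by the two cumulant theorems, and it is the object I will ultimately recover one component at a time.

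For the FOBI part, Theorem~\ref{theo:C_properties} gives $C(\tilde X) = C(U_0 \tilde Z) = U_0 D U_0^*$ with $D = C(\tilde Z)$ diagonal. Because $U_0$ is unitary, conjugation by $U_0$ carries the spectral decomposition of $D$ to that of $C(\tilde X)$: if $\{\chi_k\}_{k=1}^d$ is an eigenbasis of $D$, then $\{U_0 \chi_k\}_{k=1}^d$ is an eigenbasis of $C(\tilde X)$ with the same eigenvalues. Assumption~\ref{assu:fobi_assumption} makes these eigenvalues distinct, so the FOBI-basis is determined up to sign and order, and Lemma~\ref{lem:diagonal_spectra} applied to the finite-rank diagonal operator $D$ forces each $\chi_k$ to be canonical. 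Hence $\psi^F_k = U_0 \chi_k$ with $\chi_k$ canonical.

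For the JADE part, Theorem~\ref{theo:Cij_properties} gives $C^{ij}(\tilde X) = U_0 D^{ij} U_0^*$ with each $D^{ij} = D^{ij}(U_0,\tilde Z)$ diagonal. The key observation is that the objective \eqref{eq:joint_diag} is invariant under relabelling a basis by $U_0$: substituting $C^{ij}(\tilde X) = U_0 D^{ij} U_0^*$ and using $\langle \psi_k, U_0 D^{ij} U_0^* \psi_k\rangle_{\mathcal{H}} = \langle U_0^* \psi_k, D^{ij} U_0^* \psi_k\rangle_{\mathcal{H}}$ shows that $\{\psi_k\}$ maximizes $w$ for $\{C^{ij}(\tilde X)\}$ exactly when $\{U_0^* \psi_k\}$ maximizes $w$ for $\{D^{ij}\}$. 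As $\{\psi_k\}$ ranges over all orthonormal bases so does $\{U_0^* \psi_k\}$, so the JADE-basis equals $\{U_0 \chi_k\}$, where $\{\chi_k\}$ jointly diagonalizes $\{D^{ij}\}$. Under Assumption~\ref{assu:shared_eigenbasis} the $D^{ij}$ share an eigenbasis, so Lemma~\ref{lem:joint_diag} makes $\{\chi_k\}$ canonical, while Assumption~\ref{assu:jade_assumption} together with the equality condition of that lemma pins the joint diagonalizer down up to sign and order. Thus again $\psi^J_k = U_0 \chi_k$ with $\chi_k$ canonical.

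Finally, in both cases write $\psi^\bullet_k = U_0 \chi_k$ with $\chi_k$ canonical, say supported in the $j(k)$th component space. Then the extracted score is $\hat Z^\bullet_k = (\psi^\bullet_k \otimes \psi^\bullet_k)\tilde X = \langle \psi^\bullet_k, \tilde X\rangle_{\mathcal{H}}\, \psi^\bullet_k = \langle \chi_k, \tilde Z\rangle_{\mathcal{H}}\, U_0 \chi_k$, using unitarity of $U_0$. Because $\chi_k$ is canonical, $\langle \chi_k, \tilde Z\rangle_{\mathcal{H}}$ depends only on $\tilde Z_{j(k)}$, hence only on $Z_{j(k)}$, so $\hat Z^\bullet_k$ is a fixed function scaled by a scalar functional of a single latent component; this establishes both that $\hat Z^\bullet_k \in \mathcal{X}^4(\mathcal{H})$ and that it corresponds to exactly one $Z_j$. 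I expect the main obstacle to be the bookkeeping in the JADE case: verifying that the unitary invariance of $w$ together with Assumptions~\ref{assu:shared_eigenbasis} and~\ref{assu:jade_assumption} genuinely yields a joint diagonalizer that is unique up to sign and permutation, since Lemma~\ref{lem:joint_diag} only characterizes the maximizers as common eigenbases and the uniqueness must be extracted from the distinctness condition of Assumption~\ref{assu:jade_assumption}.
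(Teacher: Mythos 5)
Your proposal is correct and follows essentially the same route as the paper's own proof: standardize via Lemma \ref{lem:standardization}, use Theorems \ref{theo:C_properties} and \ref{theo:Cij_properties} to write $C(\tilde X)$ and the $C^{ij}(\tilde X)$ as unitary conjugates $U_0 D U_0^*$ of diagonal operators, invoke Lemmas \ref{lem:diagonal_spectra} and \ref{lem:joint_diag} (under Assumptions \ref{assu:fobi_assumption}, respectively \ref{assu:shared_eigenbasis} and \ref{assu:jade_assumption}) to conclude the underlying eigenvectors are canonical, and then compute $\hat Z^\bullet_k = \langle \chi_k, \tilde Z\rangle_{\mathcal H}\, U_0\chi_k$ so that each score depends on a single latent component. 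The one point where the paper is more careful is the non-uniqueness of $\Sigma(Z^{(d)})^{-1/2}$: instead of asserting it is block diagonal, the paper notes that every inverse square root has the form $VG$ with $V$ unitary and $G$ diagonal and absorbs $V$ into $U_0$ via unitary equivariance — a technicality your argument implicitly settles by taking the positive (hence diagonal) root.
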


The proof of Theorem \ref{theo:unmixing} shows that for each $k = 1, \ldots , d$ the procedure actually recovers the $p$-variate function $\hat{Z}_k^\bullet =  \langle \psi^\bullet_{k}, \tilde{X} \rangle_\mathcal{H} \psi^\bullet_{k} = \langle h^\bullet_{k}, \tilde{Z} \rangle_\mathcal{H} \psi^\bullet_{k}$ where the only dependency on the latent function $Z$ is through the inner product $\langle \psi^\bullet_{k}, \tilde{X} \rangle_\mathcal{H} = \langle h^\bullet_{k}, \tilde{Z} \rangle_\mathcal{H}$. Furthermore, every $h_k^\bullet$ is canonical, meaning that each of the estimates $\hat{Z}^\bullet_k$ contains information on exactly one latent component $Z_j$ and this information is entirely contained in the single inner product, $\langle \psi^\bullet_{k}, \tilde{X} \rangle_\mathcal{H}$. In the following we will refer to these inner products as the independent component scores. As more than one score can be related to a single latent function $Z_j$, the $d$-vector of independent component scores can further be divided into $m$ mutually independent subvectors, $\textbf{Z}_{(l)} \in \mathbb{R}^{d_l}$, $\sum_{l=1}^m d_l = d$, so that each subvector corresponds to a single latent function $Z_j$.

\section{The methods in practice}\label{sec:sample}

\subsection{Sample versions of the methods}

For deriving the sample version of the proposed method we make the simplifying assumption that the component spaces are the same, $\mathcal{H}_1 = \cdots = \mathcal{H}_p$. The generalization to the case of different component spaces follows easily.

Let $X \hii 1, \ldots, X \hii n$ be a random sample of $X$. Here, we use   superscript to represent the position in a sample, to differentiate from the subscript in $X \lo i$ which represents the $i$th component of $X$. Furthermore, let  $X \lo {ij}$ represent the $j$th component of $X \hii i$. Although our theory is based on infinite-dimensional spaces, our observations are always finite-dimensional and so let $X \lo {ij}(t_{m,ij})$ denote the value of the $j$th component function of the $i$th observation at the time point
\begin{align*}
\{ t_{m,ij}: m = 1, \ldots, M \lo {ij}, j= 1,\ldots, p, i = 1, \ldots, n \}  .
\end{align*}
We thus allow the measurement times and the numbers of measurements to differ across both observations and components. The underlying assumption in functional data analysis is that the observed values $X \lo {ij}(t_{m,ij})$ correspond to latent (smooth) functions that we observe only at the discrete times $t \lo {m,ij}$. The first step in implementing the method is thus to express all the observations as functions using some suitable basis.

For approximating the space $\mathcal{H}$, fix a $K$-element basis $\mathcal{G}_0 = \{ g_k \}_{k=1}^K$, the span of which we denote as $\mathcal{M}_0$. The functional approximations $\hat{x}_{ij}(t) = \sum_{k=1}^K \hat{c}_{ijk} g_k(t)$ of the observed curves in $\mathcal{M}_0$ can be found as
\begin{equation*}
(\hat{c}_{ij1}, \ldots , \hat{c}_{ijK})^T = \mbox{argmin} \sum_{m = 1}^{M_{ij}} \left\{ X \lo {ij}(t \lo {m,ij}) - \sum_{k=1}^K c_{ijk} g_k(t \lo {m,ij}) \right\}^2,
\end{equation*}
which is a least-squares type problem. Having estimated the coordinates $\hat{c}_{ijk}$ we denote in the following the coordinate vector of the $j$th component function of the $i$th observation in the basis $\mathcal{G}_0$ as $[X \lo {ij}]_{\mathcal{G}_0} = (c_{ij1}, \ldots , c_{ijK})^T \in \mathbb{R}^K$. Consider then the $pK$-dimensional product space $\mathcal{M} = \mathcal{M}_0 \times \cdots \times \mathcal{M}_0$. The space $\mathcal{M}$ then has the natural direct sum basis $\mathcal{G} = \mathcal{G}_0 \oplus \cdots \oplus \mathcal{G}_0$.  The stacked vector of the coordinates of all $p$ component functions of the $i$th observation in the basis $\mathcal{G}$ is denoted by $[X \hii i]_\mathcal{G} = ([X \lo {i1}]_{\mathcal{G}_0}^T, \ldots ,[X \lo {ip}]_{\mathcal{G}_0}^T)^T \in \mathbb{R}^{pK}$ and the matrix of all coordinates of all observations by $[X]_\mathcal{G} = ([X \hii 1 ]_\mathcal{G}, \ldots , [X \hii n]_\mathcal{G})^T \in \mathbb{R}^{n \times pK}$. We assume without loss of generality that the coordinate representations of the observations are centered, $\sum_{i=1}^n [X \lo {ij}]_{\mathcal{G}_0} = \textbf{0}$, $j = 1, \ldots , p$.

Let $\textbf{G}_\mathcal{G} = ( \langle g_k , g_{k'} \rangle \lo {\ca H} )_{k,k'=1}^K$ denote the Gram matrix of a basis $\mathcal{G} = \{ g_k \}_{k=1}^K$. For orthonormal bases the Gram matrix equals the identity matrix and if $\mathcal{G}$ is a direct sum basis $\mathcal{G} = \mathcal{G}_0 \oplus \cdots \oplus \mathcal{G}_0$ then clearly $\textbf{G}_\mathcal{G} = \mbox{diag}(\textbf{G}_{\mathcal{G}_0}, \ldots , \textbf{G}_{\mathcal{G}_0}) = (\textbf{I}_p \otimes \textbf{G}_{\mathcal{G}_0})$, where $\textbf{G}_{\mathcal{G}_0}$ is the Gram matrix of the basis $\mathcal{G}_0$ and $\otimes$ is the Kronecker product between matrices. The next theorem now describes how the coordinate representations can be used to carry out the proposed methods in practice.

\begin{theorem}\label{theo:coordinates}

Let $[\hat{\Phi}]\lo {\ca G} \in \mathbb{R}^{pK \times d}$ contain the $d$ first eigenvectors of the matrix $(1/n) [X] \lo {\ca G}^T [X] \lo {\ca G} (\textbf{I} \lo p \otimes \textbf{G} \lo {\ca G _0})$ and let the diagonal matrix $\boldsymbol{\Lambda}_d \in \mathbb{R}^{d \times d}$ hold the corresponding eigenvalues as its diagonal elements. Then, let $[\tilde X \hi i]\lo {\ca V} = \boldsymbol{\Lambda}_d^{-1/2} [\hat{\Phi}]^T \lo {\ca G} (\textbf{I} \lo p \otimes \textbf{G} \lo {\ca G _0}) [X \hi i] \lo {\ca G}  \in \mathbb{R}^d$, $i=1, \ldots , n$, contain the coordinates of the standardized observations in the eigenbasis. Finally, let

\begin{itemize}
\item[i)] the columns of $[\hat{\Psi}^F]\lo {\ca V} \in \mathbb{R}^{d \times d}$ be the eigenvectors of the matrix
\begin{equation*}
\frac{1}{n} \sum_{i=1}^n [\tilde X \hi i]\lo {\ca V}^T [\tilde X \hi i]\lo {\ca V} \cdot  [\tilde X \hi i]\lo {\ca V} [\tilde X \hi i]\lo {\ca V}^T  - (d + 2) \textbf{I}_d,
\end{equation*}
\item[ii)] the columns of $[\hat{\Psi}^J]\lo {\ca V}  = ([\hat{\psi}^J_1]\lo {\ca V}, \ldots , [\hat{\psi}^J_d]\lo {\ca V}) \in \mathbb{R}^{d \times d}$ be the orthonormal set of vectors satisfying
\begin{equation*}
[\hat{\Psi}^J]\lo {\ca V} = \underset{[\hat{\Psi}^J]\lo {\ca V}^T [\hat{\Psi}^J]\lo {\ca V} = \textbf{I}}{argmax}\sum_{k=1}^d \sum_{l=1}^d \sum_{m=1}^d \left\{ [\hat{\psi}^J_m]^T\lo {\ca V} ({}\lo {\ca V}[\hat{C}^{kl}(\tilde{X})]\lo {\ca V})  [\hat{\psi}^J_m]\lo {\ca V}  \right\}^2,
\end{equation*}
where ${}\lo {\ca V}[\hat{C}^{kl}(\tilde{X})]\lo {\ca V} = (1/n) \sum_{i=1}^n ([\tilde X \hi i]\lo {\ca V}^T \textbf{e}_k) ([\tilde X \hi i]\lo {\ca V}^T \textbf{e}_l) \cdot  [\tilde X \hi i]\lo {\ca V} [\tilde X \hi i]\lo {\ca V}^T  - \delta_{kl} \textbf{I}_d - \textbf{e}_k \textbf{e}_l^T - \textbf{e}_l \textbf{e}_k^T$.
\end{itemize}

Then, choosing either the FOBI-solution $[\hat{\Psi}^F]\lo {\ca V}$ or the JADE-solution $[\hat{\Psi}^J]\lo {\ca V}$ the independent component scores are given by
\begin{equation*}
\hat{\textbf{Z}}^i = [\hat{\Psi}^\bullet]^T\lo {\ca V} \boldsymbol{\Lambda}_d^{-1/2} [\hat{\Phi}]^T\lo {\ca G} (\textbf{I} \lo p \otimes \textbf{G} \lo {\ca G _0}) [X \hi i ]\lo {\ca G}.
\end{equation*}

\end{theorem}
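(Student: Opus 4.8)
The plan is to verify that each abstract step of the FOBI/JADE pipeline — forming the covariance matrix operator, extracting its spectral decomposition, standardizing, building the fourth cross-cumulant operators, and finally diagonalizing — admits a faithful matrix representation once every function is replaced by its coordinate vector in the basis $\ca G$. The only genuine subtlety is that the inner product of $\ca H$ is encoded by the Gram matrix rather than by the Euclidean dot product, and tracking where this Gram factor enters is what produces the matrices $(\textbf{I}_p \otimes \textbf{G}_{\ca G_0})$ in the statement.

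First I would record the fundamental translation rule: for $f, g \in \ca M$ with coordinate vectors $[f]_{\ca G}, [g]_{\ca G} \in \real^{pK}$,
\[
\langle f, g \rangle_{\ca H} = [f]_{\ca G}^{T} (\textbf{I}_p \otimes \textbf{G}_{\ca G_0}) [g]_{\ca G},
\]
so the tensor-product operator $f \otimes g$, with action $h \mapsto \langle g, h \rangle_{\ca H} f$, is represented on coordinate vectors by the matrix $[f]_{\ca G} [g]_{\ca G}^{T} (\textbf{I}_p \otimes \textbf{G}_{\ca G_0})$. Averaging over the sample then shows that the empirical covariance matrix operator $(1/n) \tsum_{i=1}^n X^{(i)} \otimes X^{(i)}$ is represented by $(1/n) [X]_{\ca G}^{T} [X]_{\ca G} (\textbf{I}_p \otimes \textbf{G}_{\ca G_0})$, exactly the matrix in the statement. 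Writing this matrix as $AG$ with $A$ symmetric and $G = \textbf{I}_p \otimes \textbf{G}_{\ca G_0}$, one checks $G(AG) = (AG)^{T}G$, so it is self-adjoint with respect to the $G$-inner product; by Lemma \ref{lem:S_properties}(i) its spectrum is real and nonnegative, and its $d$ leading eigenvectors, collected in $[\hat{\Phi}]_{\ca G}$, are $G$-orthonormal — that is, they are the coordinate vectors of the empirical eigenfunctions $\phi_k$, which are orthonormal in $\ca H$, with eigenvalues on the diagonal of $\boldsymbol{\Lambda}_d$.

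Next I would translate the standardization $\tilde X = \Sigma(X^{(d)})^{-1/2} X^{(d)}$. By the translation rule the $k$th score of $P_{\ca M_d} X^{(i)}$ in the eigenbasis is $\langle \phi_k, X^{(i)} \rangle_{\ca H} = [\phi_k]_{\ca G}^{T} G\, [X^{(i)}]_{\ca G}$; stacking these and rescaling by $\boldsymbol{\Lambda}_d^{-1/2}$ to impose unit variance gives precisely $[\tilde X^{(i)}]_{\ca V}$ as defined in the theorem. The decisive payoff is that $\{ \phi_k \}_{k=1}^d$ is orthonormal in $\ca M_d$, so relative to the coordinate system $\ca V$ all inner products reduce to ordinary Euclidean dot products, $P_{\ca M_d}$ becomes $\textbf{I}_d$, each $\phi_i \otimes \phi_j$ becomes $\textbf{e}_i \textbf{e}_j^{T}$, and each score $\langle \tilde X, \phi_i \rangle_{\ca H}$ becomes $[\tilde X]_{\ca V}^{T} \textbf{e}_i$. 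Feeding these substitutions into the simplified form of Lemma \ref{lem:cumulant_operator_2} collapses $C^{kl}(\tilde X)$ into the classical cumulant matrix \eqref{eq:cumulant_matrix}, i.e. the displayed ${}_{\ca V}[\hat{C}^{kl}(\tilde X)]_{\ca V}$, and $C(\tilde X) = \tsum_{i=1}^d C^{ii}(\tilde X)$ collapses into the matrix of part i, with $d + 2$ in place of $p + 2$ because $\ca M_d$ has dimension $d$.

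Finally, because $\ca V$ is orthonormal, spectral decomposition of $C(\tilde X)$ and joint diagonalization of $\{ C^{kl}(\tilde X) \}$ coincide with the ordinary Euclidean eigendecomposition and joint diagonalization of their coordinate matrices, so the columns of $[\hat{\Psi}^F]_{\ca V}$ and $[\hat{\Psi}^J]_{\ca V}$ are the coordinate vectors of the FOBI- and JADE-bases; composing the resulting change of basis with the standardizing map yields the stated score formula $\hat{\textbf{Z}}^{(i)} = [\hat{\Psi}^\bullet]_{\ca V}^{T} \boldsymbol{\Lambda}_d^{-1/2} [\hat{\Phi}]_{\ca G}^{T} (\textbf{I}_p \otimes \textbf{G}_{\ca G_0}) [X^{(i)}]_{\ca G}$. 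I expect the main obstacle to be purely the consistent bookkeeping of the Gram factor: one must verify both that $G$ makes the covariance matrix self-adjoint (so the spectral theory transfers) and that the rescaling by $\boldsymbol{\Lambda}_d^{-1/2}$ genuinely orthonormalizes the eigenbasis, since it is only after this orthonormalization that the non-Euclidean factors vanish and the classical matrix formulas for the cumulants, and hence parts i and ii, become valid.
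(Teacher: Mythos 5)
Your proposal is correct and follows essentially the same route as the paper's own proof: translate the covariance, standardization, and cumulant operators into $\mathcal{G}$-coordinates while tracking the Gram factor, then exploit the orthonormality of the eigenbasis $\mathcal{V}$ (where the Gram matrix becomes the identity) so that the estimated cumulant operators collapse to the classical FOBI/JADE matrices acting on the standardized coordinates, from which the score formula follows by composition. The only cosmetic difference is that you justify the eigenvector step by viewing $(1/n)[X]_{\mathcal{G}}^T[X]_{\mathcal{G}}(\textbf{I}_p \otimes \textbf{G}_{\mathcal{G}_0})$ as self-adjoint with respect to the $G$-weighted inner product, whereas the paper invokes the equivalent symmetrized eigenproblem involving $\textbf{I}_p \otimes \textbf{G}_{\mathcal{G}_0}^{1/2}$ cited from earlier work.
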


The optimization problem required by the JADE-solution is easily solved with standard joint diagonalization techniques, e.g. the Jacobi angle algorithm, see \cite{cardoso1996jacobi}. An implementation of the algorithm can be found in the R-package JADE \citep{Rjade}. Theorem \ref{theo:coordinates} shows that the resulting vector of independent component scores is a linear transformation of the original vector of coordinates, $\hat{\textbf{Z}}^i = \textbf{A} [X \hi i ]\lo {\ca G}$ for some $d \times pK$ matrix $\textbf{A}$. Consequently, we can get interpretations for the independent component scores by considering the elements of $\textbf{A}$ and observing which of the original coordinates most influence each of the obtained scores. The same procedure is used in the standard principal component analysis where the elements of the matrix $\textbf{A}$ are called loadings. An example of such an interpretation will be given in the real data example in Section \ref{sec:examples}.

\subsection{Choosing the value of $d$}

We next give some rough guidelines on choosing an appropriate reduced dimension $d$. Naturally, we can estimate independent component scores corresponding to each latent function $Z_j$ only if $d \geq p$. Moreover, even if we put $d = p$ it could still happen that some of the component functions have too low variation and cannot fit amongst the $d$ eigenvectors of $\Sigma_{XX}$ with the highest eigenvalues. From this point of view it would thus make sense to increase $d$ further to make sure we capture all the latent functions. However, doing this also increases the odds of introducing more and more of the non-dependent part of the model (noise) to the estimation.

Further complication is brought in by Assumption \ref{assu:shared_eigenbasis} which in the sample version requires that all the diagonal matrices in the JADE-decomposition share a single eigenbasis. It can be shown that a sufficient condition for this is that each of the subvectors $\textbf{Z}_{(l)}$, $l = 1, \ldots , m$ has either length one or an elliptical distribution. This condition is more likely to be fulfilled for small values of $d$ and since $d = p$ is a natural meeting point for all these rules, allowing us to estimate all $p$ latent functions in the best case, we advocate the use of the value $d = p$ in practice. This rule of thumb will be used in the examples of the next section.

\section{Examples}\label{sec:examples}

\subsection{Simulation study}

In this simulation study we compare the two proposed methods to the alternative of applying only the principal component analysis part of the algorithm, that is, only projecting the data onto the space spanned by the first $d$ eigenfunctions of $\Sigma_{X X}$.

For our setting we used $p=4$ and considered for all four component functions the same $11$-element Fourier basis $\mathcal{G}_0$. The leading coefficients in the coordinate vectors of the component functions were generated either as $(u_1, g_1, \chi_1, e_1)$ (Setting 1) or as $(u_1, u_2, u_3, u_4)$ (Setting 2) where $u_1, u_2, u_3, u_4 \sim Uniform(0, 1)$, $g_1 \sim \Gamma(3, \sqrt{3})$, $\chi_1 \sim \chi^2_3$, $e_1 \sim Exp(1)$ and all the previous random variables were independent and standardized to have zero means and unit variances. The rest of the coordinates were independent standard normal. In the first setting all the ``signal'' components thus had distinct kurtoses and in the second setting they had identical kurtoses. We generated samples of sizes $n = 1000, 2000, 4000, 8000, 16000, 32000, 64000$ and mixed the individual generated functions, $Z_i = (Z_{i1}, Z_{i2}, Z_{i3}, Z_{i4})^T$, as $[Z^i]_{\mathcal{G}} \mapsto [X^i]_{\mathcal{G}} = \bo{\Omega} [Z^i]_{\mathcal{G}}$ with a random mixing matrix $\bo{\Omega} \in \mathbb{R}^{44 \times 44}$. For simplicity, we considered estimation only in the true case $d = 4$.

To obtain $\bo{\Omega}$ we first generated the matrix $\bo{\Omega}_0 = diag \left\{ (\textbf{B}_4)^{1/2}, \textbf{I}_{40} \right\}$, where $\textbf{B}_4 = \textbf{AA}^T + \lambda \textbf{I}_4$, the matrix $\textbf{A} \in \mathbb{R}^{4 \times 4}$ has independent standard normal elements and $\lambda = 0.5, 1.0, 1.5, 2.0, 2.5$ is a tuning parameter that controls how separated the spectra of the mixed and unmixed parts are. The mixing matrix $\bo{\Omega}$ is now obtained by permuting the rows and columns of $\bo{\Omega}_0$ so that only the leading coefficients of the component functions are going to be mixed in the transformations $[Z^i]_{\mathcal{G}} \mapsto \bo{\Omega} [Z^i]_{\mathcal{G}}$. This unorthodox procedure goes to ensure that the dependency between the four functions exists only in the directions given by the eigenvectors of $\Sigma_{X X}$ with the eigenvalues $a_j^2 + \lambda$, $j=1, \ldots , 4$, where $a_j$ are the singular values of $\textbf{A}$. Thus if $\lambda > 1$, the four largest eigenvalues always (on the population level) correspond to the directions of interest, meaning that  the assumptions of our model are fulfilled and we always pick the correct four eigenvectors. A similar mixing scheme was used also in \cite{li2015functional}.

Subjecting the data to our proposed independent component methods, both of them estimate a matrix
\[\textbf{W} = [\hat{\Psi}^\bullet]^T\lo {\ca V} \boldsymbol{\Lambda}_d^{-1/2} [\hat{\Phi}]^T\lo {\ca G} (\textbf{I} \lo p \otimes \textbf{G} \lo {\ca G _0}) \in \mathbb{R}^{d \times pK} = \mathbb{R}^{4 \times 44}, \]
see Section \ref{sec:sample}, while the principal component analysis uses only the matrix $\textbf{W} = [\hat{\Phi}]^T\lo {\ca G} (\textbf{I} \lo p \otimes \textbf{G} \lo {\ca G _0}) \in \mathbb{R}^{4 \times 44}$. The independent/principal component scores are then $ \textbf{W} [X \hi i]_{\mathcal{G}} = \textbf{W} \bo{\Omega} [Z \hi i]_{\mathcal{G}}$ and for the methods to successfully separate the independent component functions each row of the gain matrix $\textbf{W} \bo{\Omega}$ should pick from $[Z \hi i]  \lo {\mathcal{G}}$ coefficients relating only to a single component function. For assessing the performance of a single replication we first squared the elements of the estimated gain matrix and then summed row-wise over each block of size $4 \times 11$, resulting into a $4 \times 4$ matrix $\textbf{R}$. The closer the matrix $\textbf{R}$ is to the set $\mathcal{P}$ of matrices with a single non-zero element in each row and column, the better the result of estimation. To quantify this we use the minimum distance index \citep{ilmonen2010new}, $D(\textbf{R}) \in [0, 1]$, which has the value zero if and only if the separation is perfect, $\textbf{R} \in \mathcal{P}$.

\begin{figure}[t!]
\begin{center}
\includegraphics[width=\textwidth]{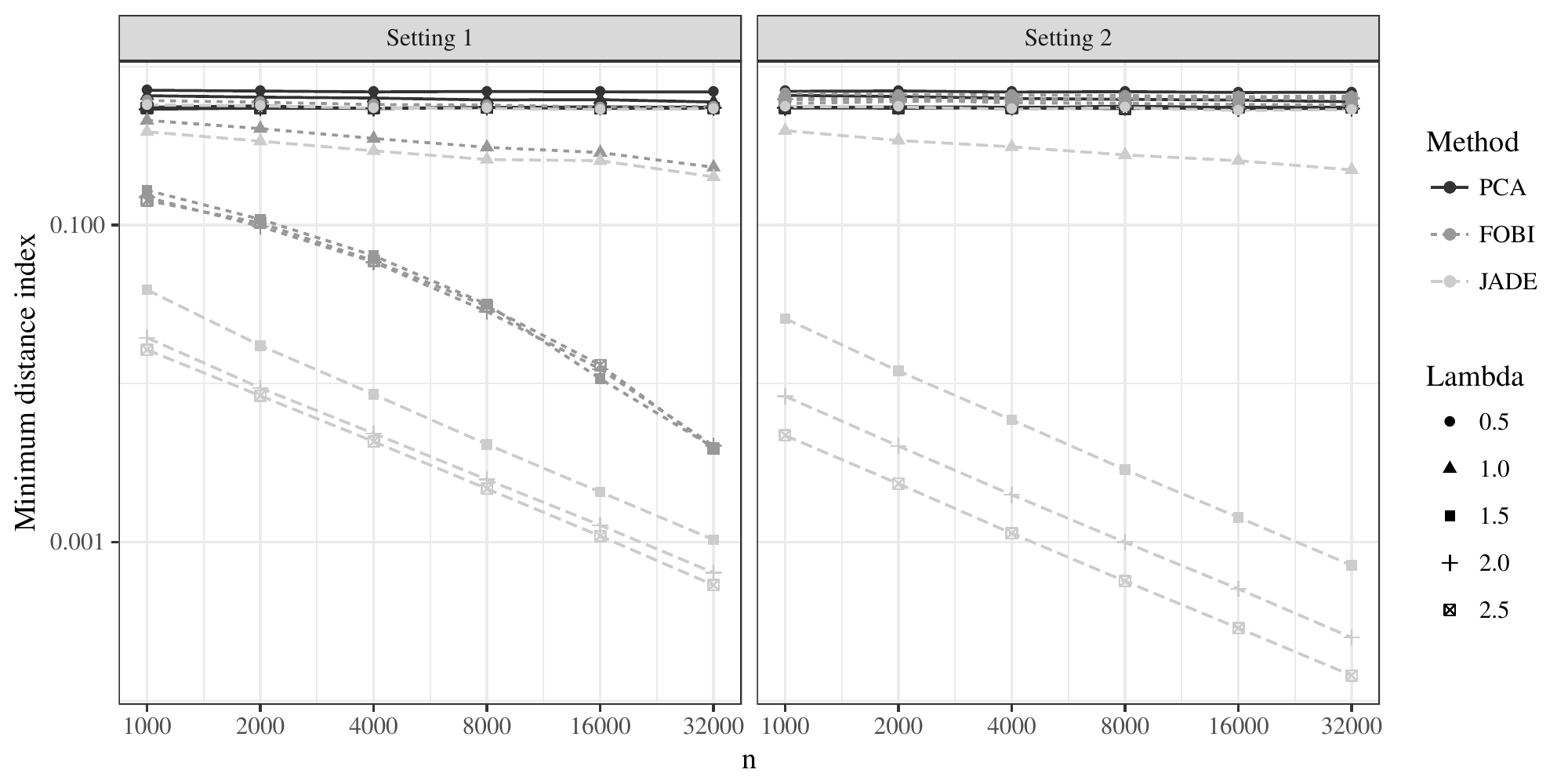}
\caption{The average minimum distance indices across different methods and settings in the simulation study. Lower value of the index indicates better separation. The scale of the $y$-axis is logarithmic.}
\label{fig:simu_1}
\end{center}
\end{figure}

From the results we expect that the principal component analysis fails to estimate the sources under all settings, as the orthogonal transformation found by it is not enough to undo our mixing by the general matrix $\textbf{B}_4^{1/2}$. The theory behind standard FOBI, on which our coordinate representation was seen to be based, says that FOBI cannot estimate components with matching kurtosis values \citep{cardoso1989source} as is the case with the identical uniform distributions in our Setting 2. On the other hand, both FOBI and JADE should be able to find the solution in Setting 1 with differing, non-zero kurtosis values, the latter most likely outmatching the former. The resulting mean minimum distance indices across 1000 replications for different settings and parameter values are shown in Fig. \ref{fig:simu_1} and distinctly verify our preconceptions. As discussed earlier, the separation fails on average if $\lambda \leq 1$ and we further see that the success of the separation is not particularly dependent on the value of $\lambda$, as long as we have $\lambda > 1$.

\subsection{Real data example}


We consider the \textit{uWave} gesture data set available from \texttt{http://zhen-wang.\\appspot.com/rice/projects\_uWave.html} \citep{liu2009uwave}.
At each day of the study the eight participants did ten repetitions of each of the eight gesture patterns in the Nokia gesture vocabulary \citep{kela2006accelerometer} using a Wii$^\circledR$ remote measuring the 3D-acceleration of the gesture. Each participant had a total of seven study days making the total number of observed samples 4480. Of these we discarded two samples which had a measurement only for a single time point. Of the observed $3$-variate curves ($x$, $y$ and $z$-acceleration) we further took the subset corresponding to the three visually most similar gestures, a square, a clockwise circle and a counterclockwise circle, making our data a sample of multivariate functional data with $n = 1679$ and $p = 3$. A standard Fourier basis of 11 functions was fitted to all observations of each component function.


\begin{figure}[t!]
\begin{center}
\includegraphics[width=\textwidth]{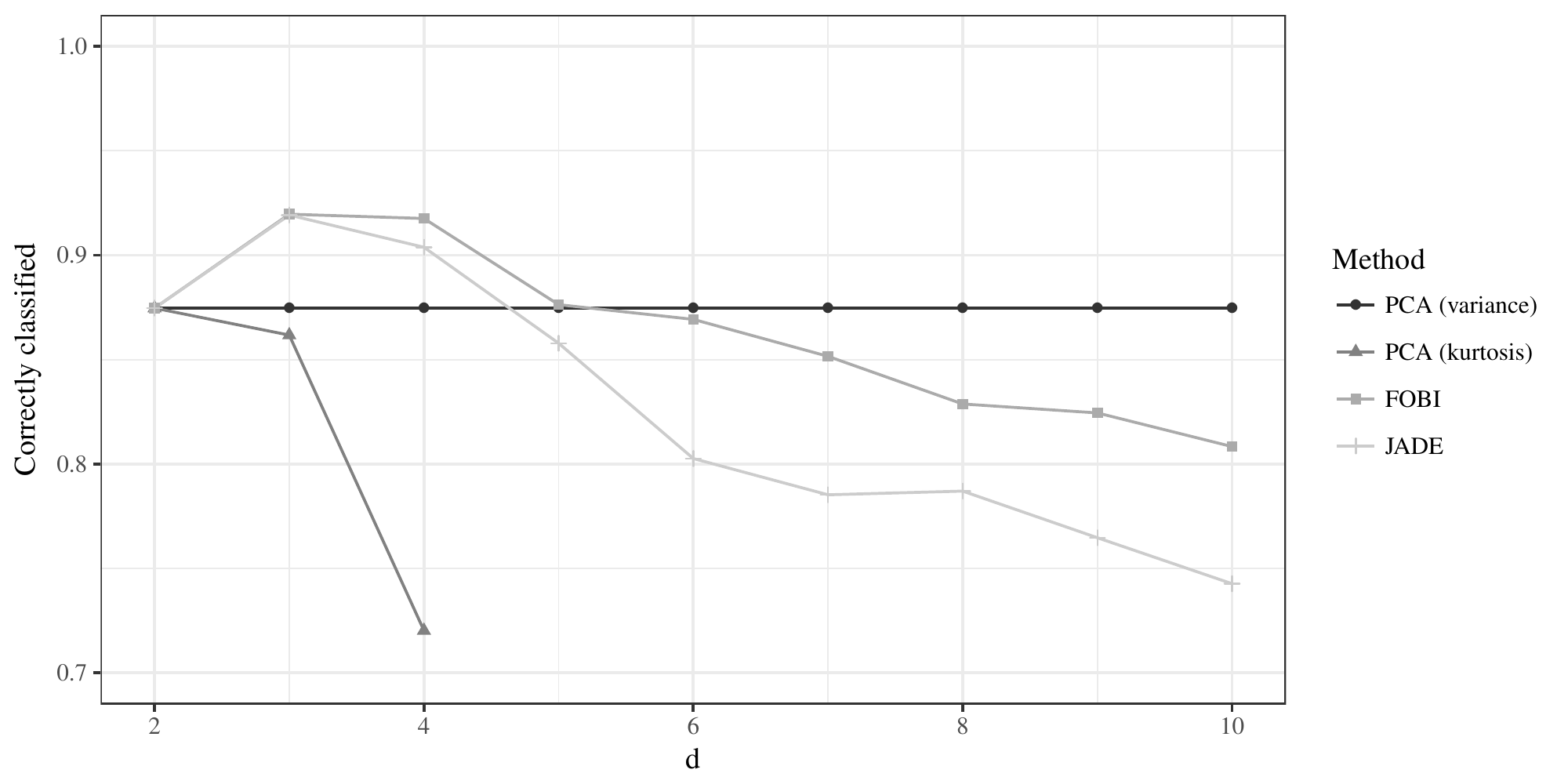}
\caption{The average proportions of correctly classified cases in the test set for different values of $d$.}
\label{fig:real_1}
\end{center}
\end{figure}

In pre-processing data, latent groups are most easily visually recognized from bivariate scatter plots and our objective is thus to extract from the data a pair of components that best reveal the latent group memberships. To evaluate the methods' capabilities for this we used the following scheme. For each of the 1000 replications we randomly partitioned the data into a training set of 400 observations and a test set of 1279 observations. Next, for each value of $d = 2, \ldots, 10$, the training set was subjected to either principal component analysis (conducted as in the previous example), FOBI or JADE. As low kurtosis is often an indicator of a multimodal distribution, for the independent component analysis methods we chose from the resulting independent component scores the two having the lowest fourth moments and for principal component analysis we considered two rules, taking the two scores with highest variances or taking the two scores with lowest fourth moments. Each chosen pair of scores was then used in quadratic discriminant analysis to create a classification rule and, finally, the proportion of correct classifications in the test set was computed for each rule.

The results are shown in Fig. \ref{fig:real_1} where the $y$-axis was cut from $0.7$ downwards to allow more accurate representation of the interesting part of the plot. The curve for principal component analysis using kurtosis as a criterion continued descending until hitting the $y$-value of around $0.5$ at $d=6$. The main points of interest include the following. All methods perform equally well when $d=2$ as then the chosen two components necessarily span the same space. Principal component analysis using variance as the criterion always chooses by definition the two first principal components regardless of the value of $d$, yielding a constant curve, and principal component analysis using kurtosis as the criterion clearly cannot find the relevant information at all. For $d=3,4$ FOBI and JADE are superior to principal component analysis in extracting the two components containing the classification information. Thus our heuristic suggestion of setting $d = p$ proved to be useful in this context.

\begin{figure}[t!]
\begin{center}
\includegraphics[width=\textwidth]{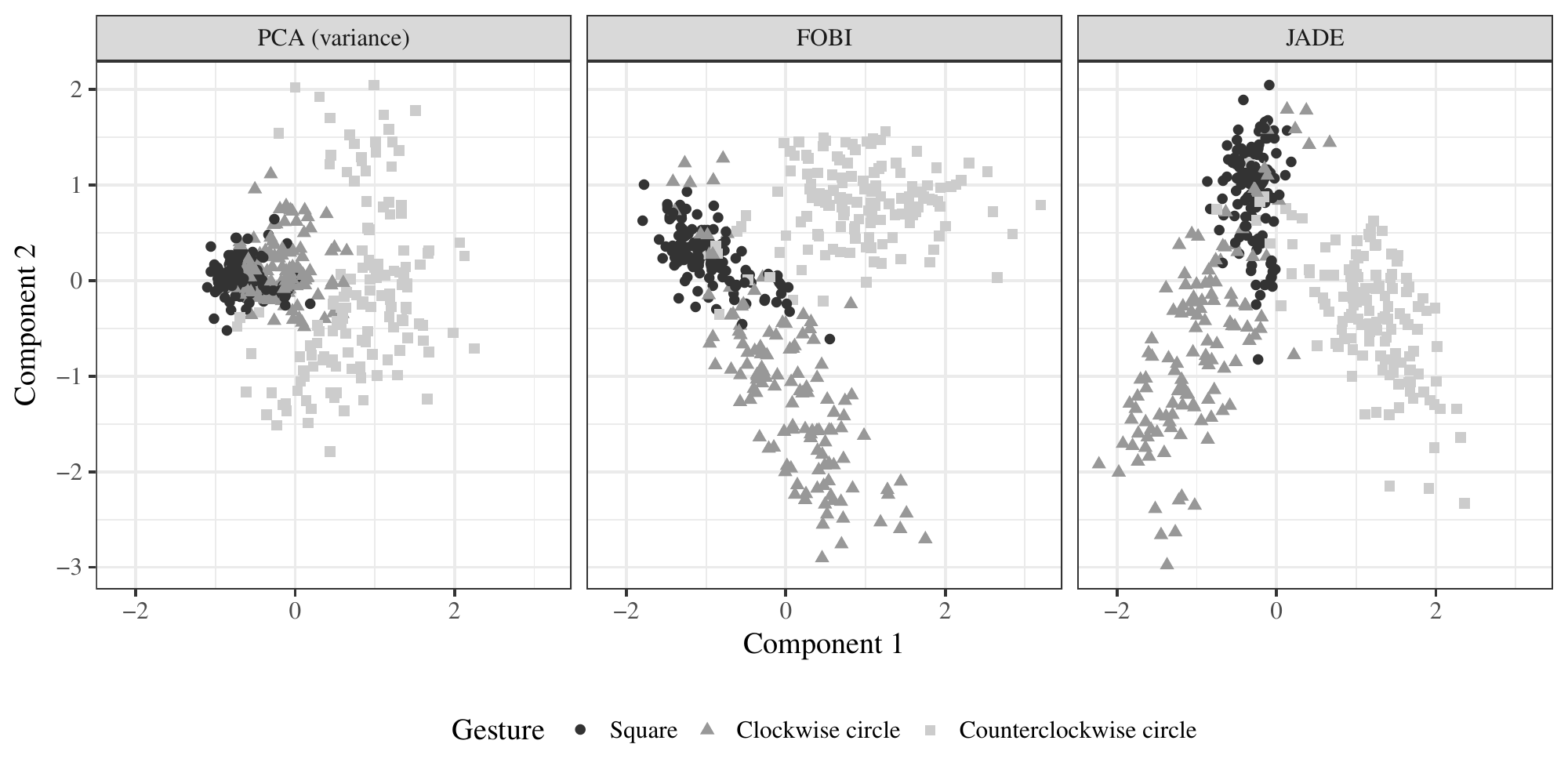}
\caption{Examples of the pairs of components found by the methods when $d=3$. }
\label{fig:real_2}
\end{center}
\end{figure}

Examples of the scatter plots of the pair of components extracted from the training data by the three methods for $d=3$ are given in Fig. \ref{fig:real_2} where the principal components have been scaled to better show the details. The figure shows that of the two components found by principal component analysis only the first one provides information on the separation of the group locations while for FOBI and JADE both components carry location information. Interpretations for the FOBI independent component scores can now be obtained by examining the loading matrix reproduced in Table \ref{tab:real_1} where any loadings with absolute value greater than 0.6 have been shaded. For example, the final element of the second row tells the contribution of the 11th basis vector of the second observed function $X_2$ to the first estimated score $\hat{Z}_1$. We can now make two main observations. First, no separation information is carried by the basis elements of order six or higher. Since the higher index functions in Fourier bases control the finer, high-frequency properties of the resulting functions this reveals that most of the classification information is expectedly contained in the large-scale properties of the movements and accelerations. Secondly, the $y$-acceleration hardly contributes to any of the scores, showing that only the $x$ and $z$ direction are relevant in the classification. Also this makes sense, assuming that the gestures are drawn in the air roughly vertically, occupying mostly the $x$-$z$ plane. Similar explanations could also be produced for the JADE and principal component analysis solutions (not shown here).

\begin{table}[ht]
\centering
\caption{The loadings of the FOBI estimate. $\hat{Z}_j$ refer to the estimated components and $X_k$ to the original functions.}
\tabcolsep=0.11cm
\begin{tabular}{ccccccccccccc}
$\hat{Z}_j$ & $X_k$ & 1 & 2 & 3 & 4 & 5 & 6 & 7 & 8 & 9 & 10 & 11 \\
 & 1 & \cellcolor{gray!20} -0.90 & 0.36 & \cellcolor{gray!20} -1.21 & -0.13 & \cellcolor{gray!20} 0.79 & 0.17 & 0.19 & -0.01 & -0.03 & -0.06 & 0.04 \\
1 & 2 & -0.33 & -0.07 & -0.52 & -0.32 & 0.59 & -0.02 & 0.07 & -0.00 & -0.05 & -0.08 & 0.01 \\
 & 3 & -0.26 & \cellcolor{gray!20} 0.92 & -0.40 & \cellcolor{gray!20} -1.16 & 0.59 & -0.32 & 0.32 & 0.00 & -0.02 & -0.05 & -0.01 \\
 & 1 & 0.49 & \cellcolor{gray!20} 1.49 & -0.08 & \cellcolor{gray!20} -1.67 & -0.10 & -0.45 & -0.07 & -0.21 & 0.12 & -0.11 & 0.10 \\
2 & 2 & -0.16 & 0.30 & -0.04 & -0.09 & 0.13 & -0.16 & -0.07 & 0.01 & -0.04 & -0.02 & 0.06 \\
 & 3 & -0.20 & -0.24 & \cellcolor{gray!20} -0.86 & \cellcolor{gray!20} 0.83 & \cellcolor{gray!20} 0.86 & 0.27 & 0.52 & 0.05 & -0.17 & 0.12 & -0.11 \\
 & 1 & 0.16 & 0.39 & 0.31 & -0.43 & -0.26 & -0.42 & 0.22 & -0.00 & -0.14 & -0.04 & -0.14 \\
3 & 2 & \cellcolor{gray!20} 0.78 & 0.51 & -0.10 & -0.30 & -0.35 & -0.12 & 0.09 & -0.03 & 0.10 & 0.02 & -0.07 \\
 & 3 & 0.15 & 0.52 & \cellcolor{gray!20} 0.72 & -0.52 & \cellcolor{gray!20} -0.99 & -0.49 & -0.53 & -0.08 & 0.24 & -0.14 & 0.13 \\
\end{tabular}
\label{tab:real_1}
\end{table}

\section{Discussion}\label{sec:discussion}

We close the paper by discussing some directions for future research. First, while the provided rule of thumb of choosing $d = p$ proved useful in the examples, the logical next step is to provide a more analytical approach, e.g. in the form of sequential hypothesis testing.

Second, Theorem \ref{theo:unmixing} shows how the independent component scores are obtained but tells us nothing about the division of the scores into the independent subvectors. In our real data example this was not an issue as visual inspection already revealed us the scores of interest, but in the case of less visual data some kind of testing procedure is called for. A similar problem was encountered in \cite{nordhausen2016independent} where an approach based on scatter matrices with the independence property was used to identify the independent subvectors, and a likewise procedure could possibly also be used here.

Third, in Section \ref{sec:sample} it was shown that the extensions of both FOBI and JADE to multivariate functional data can be applied in practice by projecting the observed functions into the space spanned by the first $d$ eigenvectors of the covariance matrix operator and then subjecting the obtained standardized principal component coefficients to regular FOBI or JADE. This naturally begs for the question whether also some other standard multivariate methods can be meaningfully extended to multivariate functional data simply by applying them to the principal component coefficients. Some preliminary testing shows that this is certainly the case for FastICA, a projection pursuit-based family of independent component methods \citep{hyvarinen1997fast}.

%
%
%
%

\appendix

\section{Proofs of results}



\begin{proof}[Proof of Lemma \ref{lem:S_properties}]
The self-adjointness of $\Sigma_{XX}$ follows simply from the earlier discussion of the adjoints of the components $\Sigma \lo {X \lo i X \lo j}$. Furthermore, by expanding element-wise we have for any $f \in \mathcal{H}$:
\begin{equation*}
\langle \Sigma \lo {XX} f , f \rangle_\mathcal{H} = E \left\{ \langle \left( X \otimes X \right) f, f \rangle_\mathcal{H} \right\} = E \left( \langle X , f \rangle_\mathcal{H}^2 \right) \geq 0,
\end{equation*}
showing that $\Sigma \lo {XX}$ is non-negative.

Let $\{ e_k \}_{k=1}^\infty$ be an orthonormal basis of $\mathcal{H}$. Using the same reasoning as above, the trace of the self-adjoint, non-negative operator $\Sigma \lo {XX}$ is then
\begin{equation*}
\mbox{tr}(\Sigma \lo {XX}) = \sum_{k=1}^\infty \langle \Sigma \lo {XX} e_{k} , e_{k} \rangle_\mathcal{H} = E \left( \sum_{k=1}^\infty \langle X , e_k \rangle_\mathcal{H}^2  \right) = E \| X \|^2_\mathcal{H},
\end{equation*}
where the last equality uses Parseval's identity. Now, by our assumptions $E \| X \|^2_\mathcal{H}$ is finite, making $\Sigma_{X X}$ a trace-class operator.

To show that the affine equivariance holds, let $A \in \mathcal{L}(\mathcal{H})$ and write
\begin{equation*}
\Sigma \left( A X \right) = E \left( AX \otimes AX \right) = E \left\{ A \left( X \otimes X \right) A^* \right\}.
\end{equation*}
Thus $\Sigma \left( A X \right)$ is the unique operator $C$ satisfying $\langle f, C g \rangle_\mathcal{H} = E \left\{ \langle f, A \left( X \otimes X \right) A^* g \rangle_\mathcal{H} \right\}$, for all $f, g \in \mathcal{H}$. Using again the definition of the expected value of a random operator the right-hand side is seen to equal $\langle f, A \Sigma ( X ) A^* g \rangle_\mathcal{H}$ showing that $\Sigma \left( A X \right) = A \Sigma ( X ) A^*$.

Finally, the full independence property follows simply by assuming that $X_i$ and $X_j$ are independent and checking that we have
\begin{equation*}
E \left\{ \langle f_i, (X_i \otimes X_j) g_j \rangle_i \right\} = E \left( \langle X_i , f_i \rangle_i \right) E \left( \langle X_j , g_j \rangle_j \right) = 0,
\end{equation*}
for all $f_i \in \mathcal{H}_i$ and $g_j \in \mathcal{H}_j$, and thus by definition $E(X_i \otimes X_j) = 0$.
\end{proof}

\hrule

\begin{proof}[Proof of Lemma \ref{lem:standardization}]
Since $\Sigma$ is affine equivariant, and since   $\{ \phi_k \}_{k=1}^\infty$ are eigenvectors of $\Sigma(X)$, we have
\begin{equation*}
\Sigma(X^{(d)}) = P_{\mathcal{M}_d} \Sigma(X) P_{\mathcal{M}_d} = \sum_{k=1}^d \lambda_k (\phi_k \otimes \phi_k),
\end{equation*}
which further implies that $\Sigma(X^{(d)})^{-1/2} = \sum_{k=1}^d \lambda_k^{-1/2} (\phi_k \otimes \phi_k)$. Next, for $Z^{(d)} = \Gamma_0 X^{(d)}$ we have
\begin{equation*}
\Sigma(Z^{(d)}) = \sum_{k=1}^d \lambda_k (\Gamma_0 \phi_k \otimes \Gamma_ 0 \phi_k) = \Gamma_0 \Sigma(X^{(d)}) \Gamma_0^*.
\end{equation*}
As $\Gamma_0$ is boundedly invertible, the inverse square root of $\Sigma(Z^{(d)})$ exists as a bounded operator, and we can write
\begin{equation}\label{eq:standardization}
\Sigma(Z^{(d)})^{-1/2} Z^{(d)} = \left\{ \Sigma(Z^{(d)})^{-1/2} \Gamma_0 \Sigma(X^{(d)})^{1/2} \right\} \Sigma(X^{(d)})^{-1/2} X^{(d)}.
\end{equation}
What remains is to prove that $A_0 = \Sigma(Z^{(d)})^{-1/2} \Gamma_0 \Sigma(X^{(d)})^{1/2}$ is unitary which follows by directly verifying,
\begin{equation*}
A_0 A_0^* = \Sigma(Z^{(d)})^{-1/2} \Gamma_0 \Sigma(X^{(d)}) \Gamma_0^* \Sigma(Z^{(d)})^{-1/2},
\end{equation*}
where $\Gamma_0 \Sigma(X^{(d)}) \Gamma_0^*$ is equal to $\Sigma(Z^{(d)})$, showing that $A_0 A_0^* = P_{\mathcal{M}_d}$. The operator $A_0$ is thus unitary and consequently also $A_0^* A_0 = P_{\mathcal{M}_d}$. Applying now $A_0^*$ from left to both sides of \eqref{eq:standardization} shows that $U_0 = A_0^*$, concluding the proof.
\end{proof}

\hrule

\begin{proof}[Proof of Lemma \ref{lem:cumulant_operator_2}]
We provide the proof for the second term in \eqref{eq:cumulant_operator_1}, the proofs for the third and fourth terms following similarly. Using the definition of the expected value of a random operator, the second term is the unique operator $A \in \mathcal{L}(\mathcal{M}_d)$ with
\[\langle f, A g \rangle_\mathcal{H} = E \left( \langle X', \phi_i \rangle_\mathcal{H} \langle X', \phi_j \rangle_\mathcal{H} \langle X, f \rangle_\mathcal{H} \langle X, g \rangle_\mathcal{H} \right), \]
for all $f, g \in \mathcal{M}_d$. The independence of $X$ and $X'$ further implies that the right-hand side can be written in the form
\[E \left\{ \langle (X' \otimes X') \phi_i, \phi_j  \rangle_\mathcal{H} \right\} E \left\{ \langle (X \otimes X) f, g  \rangle_\mathcal{H} \right\} = \langle \Sigma(X') \phi_i, \phi_j  \rangle_\mathcal{H} \langle \Sigma(X) f, g  \rangle_\mathcal{H}, \]
which equals $\langle f, \delta_{ij} P_{\mathcal{M}_d} g \rangle$ under our assumptions, concluding the proof.
\end{proof}

\hrule

\begin{proof}[Proof of Theorem \ref{theo:Cij_properties}]
Consider only the first term in the expansion of $C^{ij}$ in Lemma \ref{lem:cumulant_operator_2}. Plugging in $UZ$ we get $U E \{ \langle U Z, \phi_i \rangle_\mathcal{H} \langle U Z, \phi_j \rangle_\mathcal{H} ( Z \otimes Z ) \} U^* = U M U^*$. The $(k, l)$ component operator of the expected value $M$ is defined as the operator $A_{kl} \in \mathcal{L}(\mathcal{H}_l, \mathcal{H}_k)$ satisfying
\begin{equation}\label{eq:Cij_expectation}
\langle f_k, A_{kl} g_l \rangle_k = \sum_{s, t = 1}^p E \left( \langle Z_s, \xi_{is} \rangle_s \langle Z_{t}, \xi_{jt} \rangle_{t} \langle Z_k, f_k \rangle_k \langle Z_l, g_l \rangle_l \right),
\end{equation}
for all $f_k \in \mbox{span}(\{ \phi_{mk} \}_{m=1}^d)$ and $g_l \in \mbox{span}(\{ \phi_{ml} \}_{m=1}^d)$ where $\xi_i = (\xi_{i1}, \ldots , \xi_{ip}) = U^* \phi_i$. Concentrate first on the off-diagonal case $k \neq l$. Then either $s = k, t = l$ or $s = l, t = k$ as otherwise the independence and zero means of the component functions reduce the sum to zero. Consider the first of these cases:
\begin{equation*}
E \left\{ \langle f_k, (Z_k \otimes Z_k)  \xi_{ik} \rangle_k \right\} E \left\{ \langle  g_l, (Z_l \otimes Z_l) \xi_{jl} \rangle_l \right\} =  \langle f_k , (\xi_{ik} \otimes \xi_{jl}) g_l \rangle_k.
\end{equation*}
The expected value of an arbitrary $(k, l)$th off-diagonal component operator of $M$ is thus $(\xi_{ik} \otimes \xi_{jl}) + (\xi_{jk} \otimes \xi_{il})$, which can be recognized to be also the $(k, l)$th component operator of $U^* \{ (\phi_i \otimes \phi_j) + (\phi_j \otimes \phi_i) \} U $.

The general form for an arbitrary $(k, k)$th diagonal component operator of $M$ can be found in a similar manner. Notice first that if $k=l$ in \eqref{eq:Cij_expectation} then it must be that $s = t$ or otherwise the sum is again zero by independence and zero means. The summation over $s$ can then be divided into two cases, $s = k$ and $s \neq k$. Similar manipulation as done above yields then the expected value $E \{ (Z_k \otimes Z_k) (\xi_{ik} \otimes \xi_{jk}) (Z_k \otimes Z_k) \} + (\delta_{ij} - \langle \xi_{ik}, \xi_{jk} \rangle_k) P_k$ for the $(k, k)$th diagonal operator where the first summand comes from the former case and the second from the latter.

Putting now everything together into a matrix of operators shows that the three last terms in the alternative form for $C^{ij}$ in Lemma \ref{lem:cumulant_operator_2} cancel out, leaving us with the claimed result.
\end{proof}

\hrule

\begin{proof}[Proof of Theorem \ref{theo:C_properties}]
For an arbitrary $X \in \mathcal{X}^4(\mathcal{M}_d)$ with $\Sigma(X) = P_{\mathcal{M}_d}$ we have by Lemma \ref{lem:cumulant_operator_2}
\begin{equation}\label{eq:C_simplified}
C(X) = \sum_{i=1}^d C^{ii}(X) = E \left\{ \sum_{i=1}^d \langle X, \phi_i \rangle_\mathcal{H}^2 ( X \otimes X ) \right\} - (d + 2) P_{\mathcal{M}_d},
\end{equation}
the argument of the expectation being further simplified by Parseval's identity to $\sum_{i=1}^d \langle X, \phi_i \rangle_\mathcal{H}^2 ( X \otimes X ) = \| X \|^2_\mathcal{H} ( X \otimes X ) = ( X \otimes X )^2$. The first claimed equality now follows from the form $C(X) = E \{ ( X \otimes X )^2 \} - (d + 2) P_{\mathcal{M}_d}$.

By Theorem \ref{theo:Cij_properties}, an arbitrary off-diagonal element of the operator $C(Z) = \sum_{i=1}^d D^{ii}$ is zero. The exact form for its diagonal elements $D_{kk}$ could also be derived from \eqref{eq:Cij_diagonal} but the seeming dependency of $D_{kk} = \sum_{i=1}^d D^{ii}_{kk}$ on the operator $U$ needlessly complicates things and it is simpler to proceed straight from the form $C(Z) = E[( Z \otimes Z )^2] - (d + 2) P_{\mathcal{M}_d}$. The $(k,k)$th diagonal operator of the first term is then defined as the unique operator $A_{kk} \in \mathcal{L}(\mathcal{H}_k, \mathcal{H}_k)$ satisfying
\begin{equation*}
\langle f_k, A_{kk} g_k \rangle_k = E \left\{ \sum_{j=1}^d  \langle Z_j, Z_j \rangle_j \langle Z_k, f_k \rangle_k \langle Z_k, g_k \rangle_k \right\},
\end{equation*}
for all $f_k \in \mbox{span}(\{ \phi_{mk} \}_{m=1}^d)$ and $g_k \in \mbox{span}(\{ \phi_{mk} \}_{m=1}^d)$. Divide the summation over $j$ into two cases, $j = k$ and $j \neq k$. The former yields the term $E \{ \langle f_k, \| Z_k \|^2_k (Z_k \otimes Z_k), g_k \rangle_k \}$ contributing $E \{ \| Z_k \|^2_k (Z_k \otimes Z_k) \} = E \{ (Z_k \otimes Z_k)^2 \}$ to the final expected value. The latter yields the term
\begin{align*}
\left\{ \sum_{j \neq k} E \left( \| Z_j \|_j^2 \right) \right\} E \left\{ \langle f_k, \left( Z_k \otimes Z_k \right) , g_k \rangle_k \right\} = \left\{ \sum_{j \neq k} E \left( \| Z_j \|_j^2 \right) \right\} \langle f_k, g_k \rangle_k,
\end{align*}
where the first multiplicand can be written as $E \left( \| Z \|_\mathcal{H}^2 \right) - E \left( \| Z_k \|_k^2 \right)$, whose first term equals by Parseval's identity
\begin{equation*}
E \left( \| Z \|_\mathcal{H}^2 \right) = \sum_{i=1}^d E \left( \langle Z, \phi_i \rangle^2_\mathcal{H} \right) = \sum_{i=1}^d  \langle \phi_i, E \left( Z \otimes Z \right) \phi_i \rangle = d.
\end{equation*}
Similarly, by choosing an orthonormal basis for the $k$th component space one can show that $E \left( \| Z_k \|_k^2 \right) = d_k := \mbox{dim} [ \mbox{span}(\{ \phi_{mk} \}_{m=1}^d) ]$. The total contribution of the case $j \neq k$ to the expected value of the $(k, k)$th diagonal operator is thus $(d - d_k) P_k$. Finally, putting everything together with \eqref{eq:C_simplified} yields the desired result.
\end{proof}

\hrule
\begin{proof}[Proof of Lemma \ref{lem:diagonal_spectra}]
Inspect without loss of generality the first eigenvector $\psi_1$ and assume that it is not canonical, $\psi_1 = (\psi_{11} , \ldots , \psi_{1p})$, where again without loss of generality we assume that $\psi_{11}$ and $\psi_{12}$ are both non-zero. Then the linearly independent vectors $(\psi_{11}, 0, 0, \ldots , 0)$ and $(0, \psi_{12}, 0, \ldots , 0)$ are both eigenvectors of $D$ associated with the same eigenvalue $\tau_1$, making the eigenspace associated with the eigenvalue $\tau_1$ have dimension of at least 2, a contradiction as the assumption on the distinctness of eigenvalues implies unit rank. Thus only one of $\psi_{11} , \ldots , \psi_{1p}$ can be non-zero.
\end{proof}

\hrule
\begin{proof}[Proof of Lemma \ref{lem:joint_diag}]
By the Cauchy-Schwarz inequality and the unit length of $\psi_k$ we have
\begin{align*}
w \left( \psi_1, \ldots , \psi_d \right) \leq \sum_{i=1}^I \sum_{k=1}^d \langle \psi_k, \psi_k \rangle_{\mathcal{H}} \langle S_i \psi_k, S_i \psi_k \rangle_{\mathcal{H}} = \sum_{i=1}^I \sum_{k=1}^d  \langle S_i \psi_k, S_i \psi_k \rangle_{\mathcal{H}}.
\end{align*}
Now, $\sum_{k=1}^d \langle S_i \psi_k, S_i \psi_k \rangle_{\mathcal{H}} = \| S_i \|_{HS}^2$ for any orthonormal basis $\left\{ \psi_k \right\}_{k=1}^d$ and we have shown the first part of the claim. To see when the equality holds recall that the Cauchy-Schwarz inequality preserves equality if and only if the two vectors in question are proportional. We must thus have $\psi_k = a_{ik} S_i \psi_k$ for some $a_{ik} \in \mathbb{R}$ for all $i = 1, \ldots , I$, $k = 1, \ldots d$, which is equivalent to saying that each $\psi_k$ is an eigenvector of each $S_i$.
\end{proof}

\hrule

\begin{proof}[Proof of Theorem \ref{theo:unmixing}]
Recall first that by Lemma \ref{lem:standardization} we have $\tilde{X} = U_0 \tilde{Z}$ where $\tilde{Z} = \Sigma(Z^{(d)})^{-1/2} Z^{(d)}$. By Lemma \ref{lem:S_properties} the operator $\Sigma(Z^{(d)})$ is diagonal and thus one possible choice for the inverse square root of the operator $\Sigma(Z^{(d)})$ is also a diagonal operator, namely the diagonal operator $G$ with some inverse square roots of the diagonal elements of $\Sigma(Z^{(d)})$ as its diagonal elements. With this choice, $\Sigma(Z^{(d)})^{-1/2} = G$, also $\tilde{Z}$ has then independent component functions. A reasoning similar to the one used in Remark 2.1 in \cite{ilmonen2012invariant} shows that all inverse square roots of $\Sigma(Z^{(d)})$ are of the form $V G$ where $V$ is unitary and can by the unitary equivariance be taken out of $C(\tilde{Z})$, ``merging'' it with $U_0$. We may thus without loss of generality assume that $\Sigma(Z^{(d)})^{-1/2}$ is a diagonal operator. Invoking then finally Theorem \ref{theo:C_properties} shows that $C(\tilde{Z})$ is also a diagonal operator.

Let $\{ h^F_k \}_{k=1}^d$ be the eigenvectors of $C(\tilde{Z})$. Then by Theorem \ref{theo:C_properties} the FOBI-basis of $X$ is given by $\{ \psi^F_k \}_{k=1}^d = \{ U_0 h^F_k \}_{k=1}^d$. Then, by the unitarity of $U_0$ we have
\begin{equation*}
\hat{Z}^F_k = U_0 (h^F_{k} \otimes h^F_{k}) \tilde{Z}.
\end{equation*}
As $C(\tilde{X})$ and $C(\tilde{Z})$ share the same eigenvalues all the assumptions of Lemma \ref{lem:diagonal_spectra} are satisfied and only the $l(k)$th element of $h^F_k$ is non-zero, $k=1, \ldots d$. Consequently
\begin{equation*}
\hat{Z}_k = \langle h^F_{kl(k)}, \tilde{Z}_{l(k)} \rangle_{l(k)} U_0 h^F_{k},
\end{equation*}
showing that $\hat{Z}_k$ depends only on the $l(k)$th component of $Z$.

The result for the JADE-basis follows similarly. We first notice that by Theorem \ref{theo:Cij_properties} the operators $C^{ij}$ are semi-unitary equivariant in the sense that we may again assume that $\Sigma(Z^{(d)})^{-1/2}$ is a diagonal operator and that the random function $\tilde{Z}$ has independent component functions. Let then $\{ \psi^J_k \}_{k=1}^d$ be the joint diagonalizer of $\mathcal{C}$. Now, again by Theorem \ref{theo:Cij_properties} we have $C^{ij}(\tilde{X}) = \textbf{U}_0 D^{ij} \textbf{U}_0^*$ where $D^{ij} = D^{ij}(U_0, \tilde{Z})$ are diagonal operators, $i, j = 1, \ldots , d$. By Lemma \ref{lem:joint_diag} the joint diagonalizer of the set $\{ D^{ij} \}_{i,j = 1}^d$ is $\{ h^J_k \}_{k=1}^d$ where each $h^J_k$ is canonical. Consequently, the joint diagonalizer of $\mathcal{C}$ is $\{ \psi^J_k \}_{k=1}^d = \{ U_0 h^J_k \}_{k=1}^d$ and the desired result follows as above with FOBI.
\end{proof}

\hrule

\begin{proof}[Proof of Theorem \ref{theo:coordinates}]

First, our space being finite-dimensional, for every fixed pair of bases $\mathcal{B}, \mathcal{G}$ every linear operator $A$ in $\mathcal{M}$ has with it associated the unique matrix ${}_{\mathcal{G}}[A]_{\mathcal{B}} \in \mathbb{R}^{pK \times pK}$ that satisfies $[A f]_{\mathcal{G}} = ({}_{\mathcal{G}}[A]_{\mathcal{B}}) [f]_{\mathcal{B}}$, for all $f \in \mathcal{M}$. Furthermore, a function $f$ is an eigenfunction of the operator $A$ associated with the eigenvalue $\lambda$ if and only if $[f]_\mathcal{B}$ is an eigenvector of the matrix ${}_\mathcal{B}[A]_\mathcal{B}$ associated with the same eigenvalue $\lambda$.

The inner product of two elements $f_1, f_2 \in \mathcal{M}$ expressed in the same basis $\mathcal{G} = \{ g_k \}_{k=1}^K$ is given simply by
\begin{equation*}
\langle f_1 , f_2 \rangle = \sum_{k=1}^K \sum_{k'=1}^K ([f_1]_\mathcal{G})_k ([f_2]_\mathcal{G})_{k'} \langle g_k , g_{k'} \rangle = [f_1]_\mathcal{G}^T \textbf{G}_{\mathcal{G}} [f_2]_\mathcal{G},
\end{equation*}
where $\textbf{G}_\mathcal{G} = ( \langle g_k , g_{k'} \rangle )_{k,k'=1}^K$ is the Gram matrix of the basis $\mathcal{G}$. The tensor product between two elements $f \lo 1, f \lo 2 \in \ca M$ has the following coordinate
\begin{align}\label{eq:tensor product}
\lo {\ca G} [ f \lo 1  \otimes f \lo 2 ] \lo {\ca G} = [f \lo 1 ] \lo {\ca G} [ f \lo 2 ] \lo {\ca G} ^T \textbf{G} \lo {\ca G}.
\end{align}
These and more properties about the coordinate system were used and further developed in \cite{li2016}.

We begin with the coordinate representation of the standardization step. An estimate for the covariance matrix operator is
\begin{equation*}
\hat{\Sigma} \lo {X \lo r X \lo s} = \frac{1}{n} \sum_{i=1}^n (X_{ir} \otimes X_{is}).
\end{equation*}
The coordinate of $\hat \Sigma \lo {XX}$ is the matrix $\{ \lo {\ca G \lo 0} [\hat \Sigma \lo  {X \lo r X \lo s} ] \lo {\ca G \lo 0} \} \lo {r,s=1} \hi p$  By (\ref{eq:tensor product}),
\begin{align*}
\lo {\ca G \lo 0  } [ \hat  \Sigma \lo {X \lo r X \lo s} ] \lo {\ca G \lo 0 } = \frac{1}{n} \sum \lo {i=1} \hi n [ X \lo {ir} ]  \lo {\ca G \lo 0 } [ X \lo {is}  ] \lo {\ca G \lo 0} ^T \textbf{G} \lo {\ca G \lo 0}.
\end{align*}
Assemble these matrices together to obtain
\begin{align*}
\lo {\ca G} [ \hat \Sigma \lo {XX} ] \lo {\ca G} = \frac{1}{n} [X] \lo {\ca G} ^T [X] \lo {\ca G} (\textbf{I} \lo p \otimes \textbf{G} \lo {\ca G _0}),
\end{align*}
where $\otimes$ is the Kronecker product between matrices.

We next fix the dimension $d \leq pK$ and estimate the coordinate $[\hat{\phi}_l] \lo {\ca G}$ of the first $d$ eigenfunctions $\hat \phi \lo l$ of $\hat \Sigma \lo {XX}$. As shown in \cite{li2016}, $\hat \phi \lo l$ is the $l$th eigenfunction of the operator  $\hat \Sigma \lo {XX}$ if and only if $(\textbf{I} \lo p \otimes \textbf{G} \lo {\ca G _0} \hi {1/2} )[\hat \phi \lo l] \lo {\ca G}$ is the $l$th eigenvector of the matrix  $(\textbf{I} \lo p \otimes \textbf{G} \lo {\ca G _0} \hi {1/2} ) ( \lo {\ca G} [\hat \Sigma \lo {XX} ] \lo {\ca G} ) (\textbf{I} \lo p \otimes \textbf{G} \lo {\ca G _0} \hi {-1/2}$). The orthogonal projection of $f \in \mathcal{M}$ onto $span(\mathcal{V})$, where $\mathcal{V} = \{ \hat{\phi}_l \}_{l=1}^d$, is then
\begin{equation*}
\sum_{l=1}^d (\hat{\phi}_l \otimes \hat{\phi}_l) f = \sum_{k=1}^d [\hat{\phi}_{l}] \lo {\ca G}^T (\textbf{I} \lo p \otimes \textbf{G} \lo {\ca G _0} ) [f] \lo {\ca G} \,  \hat{\phi}_l,
\end{equation*}
and the coordinates of the observations in the eigenbasis $\mathcal{V}$ are thus
\begin{align*}
[X \hi {i (d)}]\lo {\ca V} = [\hat{\Phi}]^T \lo {\ca G} (\textbf{I} \lo p \otimes \textbf{G} \lo {\ca G _0}) [X \hi i] \lo {\ca G}  \in \mathbb{R}^d, \ \, i=1, \ldots , n,
\end{align*}
where $[\hat{\Phi}]\lo {\ca G} = ([\hat{\phi}_1]\lo {\ca G}, \ldots , [\hat{\phi}_d]\lo {\ca G})$. Let $[X \hi {(d)}] \lo {\ca V}=
([X \hi {1(d)}] \lo {\ca V}, \ldots, [X \hi {n(d)}] \lo {\ca V} )^T$. Then the above equations can be written in matrix form as
\begin{align*}
[X^{(d)}]\lo {\ca V} = [X]\lo {\ca G} (\textbf{I} \lo p \otimes  \textbf{G} \lo {\ca G _0}) [\hat{\Phi}]\lo {\ca G}.
\end{align*}
Since the principal component scores satisfy ${}\lo {\ca V} [ \hat{\Sigma}(X^{(d)})]\lo {\ca V} = \boldsymbol{\Lambda}_d$, where $\boldsymbol{\Lambda}_d = \mbox{diag}(\lambda_1, \ldots , \lambda_d)$ contains the eigenvalues of $\hat \Sigma \lo {XX}$, the coordinates of the standardized observations $\tilde X \hi i$ in the eigenbasis are
\begin{equation*}
[\tilde{X} \hi i ]\lo {\ca V} = ({}\lo {\ca V} [ \hat{\Sigma}(X^{(d)})]\lo {\ca V})^{-1/2} [X^{i(d)}]\lo {\ca V} = \boldsymbol{\Lambda}_d^{-1/2} [\hat{\Phi}]^T \lo {\ca G} (\textbf{I} \lo p \otimes \textbf{G} \lo {\ca G _0} ) [X \hi i] \lo {\ca G}.
\end{equation*}

Turning our attention to the fourth cross-cumulant operators we have for fixed $k,l = 1, \ldots, d$ the estimate
\begin{equation*}
\hat{C}^{kl}(\tilde{X}) = \frac{1}{n} \sum_{i=1}^n \langle \tilde{X} \hi i, \hat{\phi}_k \rangle\lo {\ca V} \langle \tilde{X} \hi i, \hat{\phi}_l \rangle\lo {\ca V} (\tilde{X} \hi i \otimes \tilde{X} \hi i)  - \delta_{kl} P_{\mathcal{M}_d} - \hat{\phi}_k \otimes \hat{\phi}_l - \hat{\phi}_l \otimes \hat{\phi}_k,
\end{equation*}
where the inner product $\langle \tilde{X } \hi i, \hat{\phi}_k \rangle\lo {\ca V}$ just extracts the $k$th element of the coordinate vector $[\tilde{X} \hi i]\lo {\ca V}$. Reasoning then as above with the covariance matrix operator it is straightforward to obtain the following coordinate representation:
\begin{equation*}
{}\lo {\ca V}[\hat{C}^{kl}(\tilde{X})]\lo {\ca V} = \frac{1}{n} \sum_{i=1}^n ([\tilde X \hi i]\lo {\ca V}^T \textbf{e}_k) ([\tilde X \hi i]\lo {\ca V}^T \textbf{e}_l) \cdot  [\tilde X \hi i]\lo {\ca V} [\tilde X \hi i]\lo {\ca V}^T  - \delta_{kl} \textbf{I}_d - \textbf{e}_k \textbf{e}_l^T - \textbf{e}_l \textbf{e}_k^T,
\end{equation*}
where $\textbf{e}_k$ is the $k$th canonical basis vector of $\mathbb{R}^d$ and $\textbf{I}_d$ is the $d \times d$ identity matrix. The similarity of this form to \eqref{eq:cumulant_matrix} already suggests that the functional independent component analysis solutions are found by performing regular FOBI or JADE on the coordinates $[\tilde X \hi i]\lo {\ca V}$ of the standardized observations.

The coordinate representation of the estimate of the FOBI-operator \eqref{eq:C_definition} is now simply
\begin{equation*}
{}\lo {\ca V}[\hat{C}(\tilde{X})]\lo {\ca V} = \frac{1}{n} \sum_{i=1}^n [\tilde X \hi i]\lo {\ca V}^T [\tilde X \hi i]\lo {\ca V} \cdot  [\tilde X \hi i]\lo {\ca V} [\tilde X \hi i]\lo {\ca V}^T  - (d + 2) \textbf{I}_d,
\end{equation*}
and an estimate $\mathcal{U}^F = \{ \hat{\psi}^F_m \}_{m=1}^d$ for the FOBI-basis is found from its eigendecomposition. Letting $[\hat{\Psi}^F]\lo {\ca V} = ([\hat{\psi}^F_1]\lo {\ca V}, \ldots , [\hat{\psi}^F_d]\lo {\ca V}) \in \mathbb{R}^{d \times d}$ be the coordinate representation of the eigenvectors of $\hat{C}(\tilde{X})$ in $\mathcal{V}$, the vector of the FOBI independent component scores, $\langle \hat{\psi}^F_m, \tilde X \hi i \rangle = [\hat{\psi}^F_m]\lo {\ca V}^T [\tilde X \hi i]\lo {\ca V}$, $m = 1, \ldots , d$, is then finally obtained as
\begin{equation*}
[\hat{\Psi}^F]^T\lo {\ca V} [\tilde X \hi i]\lo {\ca V} = [\hat{\Psi}^F]^T\lo {\ca V} \boldsymbol{\Lambda}_d^{-1/2} [\hat{\Phi}]^T\lo {\ca G} (\textbf{I} \lo p \otimes \textbf{G} \lo {\ca G _0}) [X \hi i ]\lo {\ca G}.
\end{equation*}

For the JADE-solution, an estimate $\mathcal{U}^J = \{ \hat{\psi}^J_m \}_{m=1}^d$ for the JADE-basis, i.e. the joint diagonalizer of the set $\{ \hat{C}^{kl}(\tilde{X}) \}_{k,l=1}^d$, is found by maximizing the quantity \eqref{eq:joint_diag}, the maximization problem now having the form
\begin{equation*}
[\hat{\Psi}^J]\lo {\ca V} = \underset{[\hat{\Psi}^J]\lo {\ca V}^T [\hat{\Psi}^J]\lo {\ca V} = \textbf{I}_d}{argmax}\sum_{k=1}^d \sum_{l=1}^d \sum_{m=1}^d \left\{ [\hat{\psi}^J_m]^T\lo {\ca V} ({}\lo {\ca V}[\hat{C}^{kl}(\tilde{X})]\lo {\ca V})  [\hat{\psi}^J_m]\lo {\ca V}  \right\}^2,
\end{equation*}
where $[\hat{\Psi}^J]\lo {\ca V} = ([\hat{\psi}^J_1]\lo {\ca V}, \ldots , [\hat{\psi}^J_d]\lo {\ca V}) \in \mathbb{R}^{d \times d}$. As with FOBI above, the vectors of the JADE independent component scores are then
\begin{equation*}
[\hat{\Psi}^J]^T\lo {\ca V} [\tilde X \hi i]\lo {\ca V} = [\hat{\Psi}^J]^T\lo {\ca V} \boldsymbol{\Lambda}_d^{-1/2} [\hat{\Phi}]^T\lo {\ca G} (\textbf{I} \lo p \otimes \textbf{G} \lo {\ca G _0} ) [X \hi i]\lo {\ca G}.
\end{equation*}

\end{proof}

\bibliographystyle{Chicago}

\bibliography{references}

\end{document}